\newtheorem{thm}{Theorem}[section]
\newtheorem{cor}[thm]{Corollary}
\newtheorem{lem}[thm]{Lemma}
\newtheorem{prop}[thm]{Proposition}
\theoremstyle{definition}
\newtheorem{defn}[thm]{Definition}
\theoremstyle{definition}
\theoremstyle{definition}
\newtheorem{question}[thm]{Question}
\theoremstyle{definition}
\newtheorem{obs}[thm]{Observation}
\theoremstyle{definition}
\newtheorem{ex}[thm]{Example}
\newcommand{\D}{\mathcal{D}}
\newcommand{\T}{\mathcal{T}}
\newcommand{\M}{\mathcal{M}}
\renewcommand{\S}{\mathcal{S}}
\newcommand{\ol}{\overline}
\DeclareMathOperator{\Asc}{Asc}
\DeclareMathOperator{\Des}{Des}
\title{A new statistic on Dyck paths for counting 3-dimensional Catalan words}
\author{Kassie Archer}
\address[K. Archer]{University of Texas at Tyler, Tyler, TX 75799 USA}
\email{karcher@uttyler.edu}
\author{Christina Graves}
\address[C. Graves]{University of Texas at Tyler, Tyler, TX 75799 USA}
\email{cgraves@uttyler.edu}
\begin{document}

\maketitle

\begin{abstract}
A 3-dimensional Catalan word is a word on three letters so that the subword on any two letters is a Dyck path.  For a given Dyck path $D$, a recently defined statistic counts the number of Catalan words with the property that any subword on two letters is exactly $D$. In this paper, we enumerate Dyck paths with this statistic equal to certain values, including all primes. The formulas obtained are in terms of Motzkin numbers and Motzkin ballot numbers. 
\end{abstract}

\section{Introduction}

Dyck paths of semilength $n$ are paths from the origin $(0,0)$ to the point $(2n,0)$ that consist of steps $u=(1,1)$ and $d=(1,-1)$ and do not pass below the $x$-axis. Let us denote by $\D_n$ the set of Dyck paths of semilength $n$. It is a well-known fact that $\D_n$ is enumerated by the Catalan numbers. 

A \emph{3-dimensional Catalan path} (or just \emph{Catalan path}) is a higher-dimensional analog of a Dyck path. It is a path from $(0,0,0)$ to $(n,n,n)$ with steps $(1,0,0)$, $(0,1,0)$, and $(0,0,1)$, so at each lattice point $(x,y,z)$ along the path, we have $ x\geq y\geq z$. 
A \emph{3-dimensional Catalan word} (or just \emph{Catalan word}) is the word on the letters $\{x,y,z\}$ associated to a Catalan path where $x$ corresponds to the step in the $x$-direction $(1,0,0)$, $y$ corresponds to the step in the $y$-direction $(0,1,0)$, and $z$ corresponds to a step in the $z$ direction $(0,0,1)$. As an example, the complete list of Catalan words with $n=2$ is: 
$$xxyyzz \quad xxyzyz \quad xyxyzz \quad xyxzyz \quad xyzxyz.$$
Given a Catalan word $C$, the subword consisting only of $x$'s and $y$'s corresponds to a Dyck path by associating each $x$ to a $u$ and each $y$ to a $d$. Let us call this Dyck path $D_{xy}(C)$. Similarly, the subword consisting only of $y$'s and $z$'s is denoted by $D_{yz}(C)$ by relabeling each $y$ with a $u$ and each $z$ with a $d$.
For example, if $C=xxyxyzzxyyzz$, then $D_{xy}(C) = uudududd$ and $D_{yz}(C) =uudduudd$. 

Catalan words have been studied previously, see for example in \cite{GuProd20, Prod, Sulanke, Zeil}. 
In \cite{GuProd20} and \cite{Prod}, the authors study Catalan words $C$ of length $3n$  with $D_{xy}(C)=udud\ldots ud$ and determine that the number of such Catalan words is equal to $\frac{1}{2n+1}{{3n}\choose{n}}$. Notice that when $n=2$, the three Catalan words with this property are those in the above list whose $x$'s and $y$'s alternate.

In \cite{ArcGra21}, though it wasn't stated explicitly, it was found that the number of Catalan words $C$ of length $3n$ with $D_{xy}(C)=D_{yz}(C)$ is also $\frac{1}{2n+1}{{3n}\choose{n}}$. Such Catalan words have the property that the subword consisting of $x$'s and $y$'s is the same pattern as the subword consisting of $y$'s and $z$'s. For $n=2$, the three Catalan words with this property are:
\[ xxyyzz \quad xyxzyz \quad xyzxyz.\]
 The authors further show that for any fixed Dyck path $D$, the  number of Catalan words $C$ with $D_{xy}(C)=D_{yz}(C)=D$ is  given by
$$L(D) = \prod_{i=1}^{n-1} {r_i(D) + s_i(D) \choose r_i(D)}$$, where $r_i(D)$ is the number of down steps between the $i^{\text{th}}$ and $(i+1)^{\text{st}}$ up step in $D$, and $s_i(D)$ is the number of up steps between the $i^{\text{th}}$ and $(i+1)^{\text{st}}$ down step in $D$. The table in Figure~\ref{CatWord} shows all Dyck words $D \in \D_3$ and all corresponding Catalan paths $C$ with $D_{xy}(C)=D_{yz}(C)=D$.

\begin{figure}
\begin{center}
\begin{tabular}{c|c|l}
${D}$ & ${L(D)}$ & Catalan word $C$ with   ${D_{xy}(C)=D_{yz}(C)=D}$\\  \hline
$uuuddd$ & 1 & $xxxyyyzzz$\\ \hline
$uududd$ & 1 & $xxyxyzyzz$\\ \hline
$uuddud$ & 3 & $xxyyzzxyz, \ xxyyzxzyz,\  xxyyxzzyz$\\ \hline
$uduudd$ & 3 & $xyzxxyyzz, \ xyxzxyyzz, \ xyxxzyyzz$\\ \hline
$ududud$ & 4 & $xyzxyzxyz, \ xyzxyxzyz, \ xyxzyzxyz, \ xyxzyxzyz$
\end{tabular}
\end{center}
\caption{
All Dyck words $D \in \D_3$, and all corresponding Catalan words $C$ with ${D_{xy}(C)=D_{yz}(C)=D}$. There are $\frac{1}{7}{9 \choose 3} = 12$ total Catalan words $C$ of length $9$ with ${D_{xy}(C)=D_{yz}(C)}$. } \label{CatWord}
\end{figure}

As an application of the statistic $L(D)$, in \cite{ArcGra21} it was found that the number of 321-avoiding permutations of length $3n$ composed only of 3-cycles is equal to the following sum over Dyck paths:
\begin{equation}\label{eqnSumL2}
|\S_{3n}^\star(321)| = \sum_{D \in \D_n} L(D)\cdot 2^{h(D)},
\end{equation}
where $h(D)$ is the number of \emph{returns}, that is, the number of times a down step in the Dyck path $D$ touches the $x$-axis.

In this paper, we study this statistic more directly, asking the following question.
\begin{question} For a fixed $k$, how many Dyck paths $D \in \D_n$ have $L(D)=k$?\end{question} Equivalently, we could ask: how many Dyck paths $D \in \D_n$ correspond to exactly $k$ Catalan words $C$ with $D_{xy}(C) = D_{yz}(C) = D$? We completely answer this question when $k=1$, $k$ is a prime number, or $k=4$. The number of Dyck paths with $L=1$ is found to be the Motzkin numbers; see Theorem~\ref{TheoremL1}. When $k$ is prime, the number of Dyck paths with $L=k$ can be expressed in terms of the Motzkin numbers.  These results are found in Theorem~\ref{TheoremL2} and Theorem~\ref{TheoremLp}. Finally, when $k=4$, the number of Dyck paths with $L=4$ can also be expressed in terms of the Motzkin numbers; these results are found in Theorem~\ref{thm:L4}. A summary of these values for $k \in \{1,2,\ldots, 7\}$ can be found in the table in Figure~\ref{TableL}.

\begin{figure}[h]
\renewcommand{\arraystretch}{1.2}
\begin{tabular}{|r|l|c|c|}
\hline $|\D_n^k|$ & \textbf{Sequence starting at $n=k$} & \textbf{OEIS} & \textbf{Theorem} \\ \hline \hline
$|\D_n^1|$ & $1, 1, 2, 4, 9, 21, 51, 127, 323, \ldots$ & A001006 & Theorem \ref{TheoremL1}\\ \hline
$|\D_n^2|$ & $1,0,1,2,6,16,45,126,357,\ldots$ & A005717& Theorem \ref{TheoremL2}\\ \hline
$|\D_n^3|$ &$2, 2, 4, 10, 26, 70, 192, 534, \ldots$ & $2\cdot($A005773$)$ & \multirow{1}{*}{Theorem \ref{TheoremLp}} \\ \hline
$|\D_n^4|$ & $2, 5, 9, 25, 65, 181, 505, 1434, \ldots$ &$2\cdot($A025565$)$ + A352916 & Theorem \ref{thm:L4}\\ \hline
$|\D_n^5|$ &$2, 6, 14, 36, 96, 262, 726, 2034,  \ldots$ & $2\cdot($A225034$)$ &\multirow{1}{*}{Theorem \ref{TheoremLp}} \\ \hline
$|\D_n^6|$ & $14, 34, 92, 252, 710, 2026, 5844, \ldots$ && Section~\ref{SecRemarks}\\ \hline
$|\D_n^7|$ &$2, 10, 32, 94, 272, 784, 2260, 6524, \ldots$ & $2\cdot($A353133$)$ & \multirow{1}{*}{Theorem \ref{TheoremLp}}\\ \hline
\end{tabular}
\caption{The number of Dyck paths $D$ of semilength $n$ with $L(D)=k$.} \label{TableL}
\end{figure}

\section{Preliminaries}

We begin by stating a few basic definitions and introducing relevant notation.
\begin{defn} Let $D \in \D_n$.
\begin{enumerate}
\item An \emph{ascent} of $D$ is a maximal set of contiguous up steps; a \emph{descent} of $D$ is a maximal set of contiguous down steps.
\item If $D$ has $k$ ascents, the \emph{ascent sequence} of $D$ is given by $\Asc(D) = (a_1, a_2, \ldots, a_k)$ where $a_1$ is the length of the first ascent and $a_i - a_{i-1}$ is the length of the $i$th ascent for $2 \leq i \leq k$. 
\item Similarly, the \emph{descent sequence} of $D$ is given by $\Des(D) = (b_1, \ldots, b_k)$ where $b_1$ is the length of the first descent and $b_i - b_{i-1}$ is the length of the $i$th descent for $2 \leq i \leq k$. We also occasionally use the convention that $a_0=b_0 = 0$.
\item The \emph{$r$-$s$ array} of $D$ is the $2 \times n$ vector,
\[ \begin{pmatrix} r_1 & r_2 & \cdots  & r_{n-1}\\ s_1 & s_2 & \cdots & s_{n-1} \end{pmatrix} \]
where $r_i$ is the number of down steps between the $i^{\text{th}}$ and $(i+1)^{\text{st}}$ up step, and $s_i$ is the number of up steps between the $i^{\text{th}}$ and $(i+1)^{\text{st}}$ down step.
\item The statistic $L(D)$ is defined by $$L(D) = \prod_{i=1}^{n-1} {r_i(D) + s_i(D) \choose r_i(D)}.$$
\end{enumerate}
\end{defn}

We note that both the ascent sequence and the descent sequence are increasing, $a_i \geq b_i > 0$ for any $i$, and $a_k = b_k = n$ for any Dyck path with semilength $n$.  Furthermore, it is clear that any pair of sequences satisfying these properties produces a unique Dyck path.  There is also a relationship between the $r$-$s$ array of $D$ and the ascent and descent sequences as follows:

\begin{equation}\label{rs}
r_k = \begin{cases} 0 & \text{if } k \notin \Asc(D) \\ b_i - b_{i-1}& \text{if } k = a_i \text{ for some } a_i \in \Asc(D), \end{cases}
\end{equation}

\begin{equation}\label{rs2}
s_k = \begin{cases} 0 & \text{if } k \notin \Des(D) \\ a_{i+1} - a_i & \text{if } k = b_i \text{ for some } b_i \in \Des(D). \end{cases}
\end{equation}
The following example illustrates these definitions.

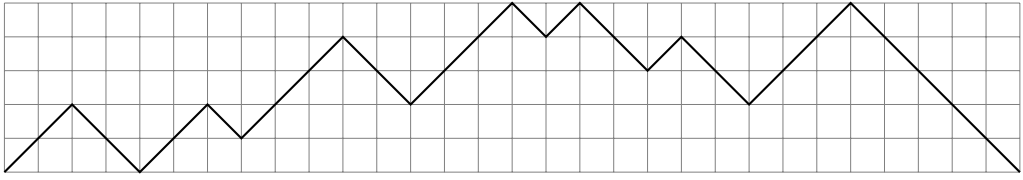
\begin{figure}
\begin{tikzpicture}[scale=.45]
\draw[help lines] (0,0) grid (30,5);
\draw[thick] (0,0)--(2,2)--(4,0)--(6,2)--(7,1)--(10,4)--(12,2)--(15,5)--(16,4)--(17,5)--(19,3)--(20,4)--(22,2)--(25,5)--(30,0);
\end{tikzpicture}
\caption{Dyck path $D$ with $L(D)=24$.}
\label{fig:dyckexample}
\end{figure}

\begin{ex} \label{RSEx} Consider the Dyck path
\[ D = uudduuduuudduuududdudduuuddddd, \]
which is pictured in Figure~\ref{fig:dyckexample}.  The ascent sequence and descent sequence of $D$ are
\[ \Asc(D) = (2, 4, 7, 10, 11, 12, 15) \quad\text { and } \quad \Des(D) = (2, 3, 5, 6, 8, 10, 15), \]
and the $r$-$s$ array of $D$ is
\[
\left( \begin{array}{cccccccccccccc}
0 & 2 & 0 & 1 & 0 & 0 & 2 & 0 & 0 & 1 & 2 & 2 & 0 & 0\\
0 & 2 & 3 & 0 & 3 & 1 & 0 & 1 & 0 & 3 & 0 & 0 & 0 & 0\end{array} \right).
\]
In order to compute $L(D)$, we note that if the $r$-$s$ array has at least one 0 in column $i$, then ${r_i + s_i \choose r_i} = 1$.  There are only two columns, columns 2 and 10, where both entries are nonzero.  Thus, \[ L(D) = {r_2 + s_2 \choose r_2}{r_{10} + s_{10} \choose r_{10}}={2 + 2 \choose 2} {1 + 3 \choose 3} = 24. \]
\end{ex}

The results in this paper rely on Motzkin numbers and Motzkin paths. A \emph{Motzkin path of length $n$} is a path from $(0,0)$ to $(n,0)$ composed of up steps $u=(1,1),$ down steps $d=(1,-1)$, and horizontal steps $h=(1,0)$, that does not pass below the $x$-axis. The set of Motzkin paths of length $n$ will be denoted $\mathcal{M}_n$ and the $n$th Motzkin number is  $M_n = |\mathcal{M}_n|$. (See OEIS A001006.)

We will also be considering modified Motzkin words as follows. Define $\mathcal{M}^*_n$ to be the set of words of length $n$ on the alphabet $\{h, u, d, *\}$ where the removal of all the $*$'s results in a Motzkin path. For each modified Motzkin word $M^* \in \M_{n-1}^*$, we can find a corresponding Dyck path in $\D_n$ by the procedure described in the following definition.

\begin{defn} \label{theta} Let  $M^* \in \mathcal{M}^*_{n-1}$. Define $D_{M^*}$ to be the Dyck path in $\D_n$ where $\Asc(D_{M^*})$ is the increasing sequence with elements from the set
\[ \{j : m_j = d \text{ or } m_j=*\} \cup \{n\} \]
and $\Des(D_{M^*})$ is the increasing sequence with elements from the set
\[ \{j : m_j = u \text{ or } m_j=*\} \cup \{n\}. \]
Furthermore, given $D\in\D_n$, define $M^*_D = m_1m_2\cdots m_{n-1} \in \mathcal{M}^*_{n-1}$ by 
\[ m_i = \begin{cases}  * & \text{if } r_i > 0 \text{ and } s_i > 0\\
u & \text{if } r_i=0 \text{ and } s_i>0\\
d & \text{if } r_i>0 \text{ and } s_i=0\\
h & \text{if } r_i=s_i=0.\\
\end{cases}
\]
\end{defn}

Notice that this process defines a one-to-one correspondence between $\mathcal{M}^*_{n-1}$ and $\D_n$. That is, $D_{M_D^*} = D$ and $M^*_{D_{M^*}} = M^*$.  Because this is used extensively in future proofs, we provide the following example.

\begin{ex} Let $D$ be the Dyck path defined in Example~\ref{RSEx}, pictured in Figure~\ref{fig:dyckexample}, with $r$-$s$ array:
\[
\left( \begin{array}{cccccccccccccc}
0 & 2 & 0 & 1 & 0 & 0 & 2 & 0 & 0 & 1 & 2 & 2 & 0 & 0\\
0 & 2 & 3 & 0 & 3 & 1 & 0 & 1 & 0 & 3 & 0 & 0 & 0 & 0\end{array} \right).
\]
The columns of the $r$-$s$ array help us to easily find $M^*_D$:
\begin{itemize} \item if column $i$ has two 0's, the $i$th letter in $M^*_D$ is $h$;
\item if column $i$ has a 0 on top and a nonzero number on bottom, the $i$th letter in $M^*_D$ is $u$;
\item if column $i$ has a 0 on bottom and a nonzero number on top, the $i$th letter in $M^*_D$ is $d$; and
\item if column $i$ has a two nonzero entries, the $i$th letter in $M^*_D$ is $*$.
\end{itemize}
Thus,
\[ M^*_D = h*uduuduh*ddhh. \]
Conversely, given $M^*_D$ as above, we find $D=D_{M_D^*}$ by first computing $\Asc(D)$ and $\Des(D)$. The sequence $\Asc(D)$ contains all the positions in $M^*_D$ that are either $d$ or $*$ while $\Des(D)$ contains all the positions in $M^*_D$ that are either $u$ or $*$. Thus,
\[ \Asc(D) = (2, 4, 7, 10, 11, 12, 15) \quad \text{and} \quad \Des(D) = (2, 3, 5, 6, 8, 10, 15).\]
\end{ex}

Notice that $L(D)$ is determined by the product of the binomial coefficients corresponding to the positions of $*$'s in $M^*_D$.  One final notation we use is to let  $\D_n^k$ be the set of Dyck paths $D$ with semilength $n$ and $L(D) = k$. With these definitions at hand, we are now ready to prove our main results.

\section{Dyck paths with $L=1$ or $L=\binom{r_k+s_k}{s_k}$ for some $k$} \label{SecRS}
In this section, we enumerate Dyck paths $D \in \D_n$ where $M^*_D$ has at most one $*$.  Because $L(D)$ is determined by the product of the binomial coefficients corresponding to the $*$ entries in $M^*_D$, Dyck paths with $L=1$ correspond exactly to the cases where $M^*_D$ has no $*$'s and are thus Motzkin paths.  Therefore, these Dyck paths will be enumerated by the well-studied Motzkin numbers.

\begin{thm} \label{TheoremL1}
For $n\geq 1$,  the number of Dyck paths $D$ with semilength $n$ and $L(D)=1$ is
\[  |\D_n^1| = M_{n-1}, \] where $M_{n-1}$ is the $(n-1)^{\text{st}}$ Motzkin number. 
\end{thm}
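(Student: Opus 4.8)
The plan is to leverage the one-to-one correspondence between $\M^*_{n-1}$ and $\D_n$ established in Definition~\ref{theta}, together with the observation recorded just before the theorem that $L(D)$ equals the product of the binomial coefficients $\binom{r_i + s_i}{r_i}$ taken over exactly those positions $i$ at which $M^*_D$ has a $*$. The core of the argument is therefore to show that $L(D) = 1$ forces $M^*_D$ to contain no $*$ at all, and then to identify the resulting $*$-free words with Motzkin paths of length $n-1$.

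First I would argue that each $*$ in $M^*_D$ contributes a factor strictly greater than $1$ to $L(D)$. By the definition of $M^*_D$, a $*$ occurs in position $i$ precisely when $r_i > 0$ and $s_i > 0$; since both entries are then positive integers, we have $r_i + s_i \geq 2$ and $1 \leq r_i \leq r_i + s_i - 1$, so that $\binom{r_i + s_i}{r_i} \geq \binom{2}{1} = 2$. Because $L(D)$ is a product of such factors, with every other column contributing the factor $1$, it follows that $L(D) = 1$ if and only if $M^*_D$ has no $*$.

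Next I would identify these $*$-free words. A word in $\M^*_{n-1}$ containing no $*$ is, directly from the definition of $\M^*_{n-1}$, a word on the alphabet $\{h, u, d\}$ that is itself a Motzkin path of length $n-1$ (deleting the empty collection of $*$'s leaves the word unchanged, and that word is required to be a Motzkin path). Hence the restriction of the bijection $D \mapsto M^*_D$ to $\D_n^1$ is a bijection onto $\M_{n-1}$, which yields $|\D_n^1| = |\M_{n-1}| = M_{n-1}$.

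I do not expect any serious obstacle here: once the correspondence of Definition~\ref{theta} is in hand, the only genuine content is the strict inequality $\binom{r_i + s_i}{r_i} > 1$ at each $*$, which is exactly what rules out any $*$ from appearing. The most delicate point to state cleanly is that ``no $*$'s'' is equivalent to ``Motzkin path,'' so that the bijection carries $\D_n^1$ precisely onto all of $\M_{n-1}$ rather than onto a proper subset or superset.
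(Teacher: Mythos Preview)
Your proposal is correct and follows essentially the same argument as the paper: both use the bijection of Definition~\ref{theta} and observe that $L(D)=1$ forces every column of the $r$--$s$ array to have a zero (equivalently, $M^*_D$ has no $*$), so that $M^*_D$ is a genuine Motzkin path in $\M_{n-1}$. Your version is somewhat more explicit in justifying $\binom{r_i+s_i}{r_i}\ge 2$ at each $*$, but the strategy is identical.
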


\begin{proof} Let $D \in \D_n^1$. Since $L(D) = 1$, it must be the case that either $r_i(D) = 0$ or $s_i(D) = 0$ for all $i$.  By Definition~\ref{theta}, $M^*_D$ consists only of elements in $\{h, u, d\}$ and is thus a Motzkin path in $\mathcal{M}_{n-1}$. This process is invertible, as given any Motzkin path $M \in \mathcal{M}_{n-1} \subseteq \mathcal{M}^*_{n-1}$, we have $D_{M_D} = D$.
\end{proof}

As an example, the table in Figure \ref{L1Figure} shows the $M_4 = 9$ Dyck paths in $\D_5^1$ and their corresponding Motzkin paths.  

\begin{figure} 
\begin{center}
\begin{tabular}{c|c|c|c}
Dyck path $D$& $r$-$s$ array & $M^*_D$ & Motzkin path\\ \hline 
\begin{tikzpicture}[scale=.2, baseline=0]
\draw[help lines] (0,0) grid (10,5);
\draw[thick] (0,0)--(5,5)--(10,0);
\node at (0,5.2)  {\color{red!90!black}\ };
\end{tikzpicture} & $\begin{pmatrix} 0 & 0 & 0 & 0\\0&0&0&0\end{pmatrix}$  & $hhhh$ & 
\begin{tikzpicture}[scale=.3]
\draw[help lines] (0,0) grid (4,1);
\draw[thick] (0,0)--(4,0);
\end{tikzpicture}
\\ \hline
\begin{tikzpicture}[scale=.2]
\draw[help lines] (0,0) grid (10,4);
\draw[thick] (0,0)--(4,4)--(5,3)--(6,4)--(10,0);
\node at (0,4.2)  {\color{red!90!black}\ };
\end{tikzpicture} & \begin{tabular}{c}$\begin{pmatrix} 0 & 0 & 0 & 1\\1&0&0&0\end{pmatrix}$\end{tabular} & $uhhd$ & 
\begin{tikzpicture}[scale=.3]
\draw[help lines] (0,0) grid (4,1);
\draw[thick] (0,0)--(1,1)--(2,1)--(3,1)--(4,0);
\end{tikzpicture}\\  \hline
\begin{tikzpicture}[scale=.2]
\draw[help lines] (0,0) grid (10,4);
\draw[thick] (0,0)--(4,4)--(6,2)--(7,3)--(10,0);
\node at (0,4.5)  {\color{red!90!black}\ };
\end{tikzpicture} & $\begin{pmatrix} 0 & 0 & 0 & 2\\0&1&0&0\end{pmatrix}$ & $huhd$ & 
\begin{tikzpicture}[scale=.3]
\draw[help lines] (0,0) grid (4,1);
\draw[thick] (0,0)--(1,0)--(2,1)--(3,1)--(4,0);
\end{tikzpicture}\\ \hline
\begin{tikzpicture}[scale=.2]
\draw[help lines] (0,0) grid (10,4);
\draw[thick] (0,0)--(4,4)--(7,1)--(8,2)--(10,0);
\node at (0,4.5)  {\color{red!90!black}\ };
\end{tikzpicture} & $\begin{pmatrix} 0 & 0 & 0 & 3\\0&0&1&0\end{pmatrix}$ & $hhud$ &
\begin{tikzpicture}[scale=.3]
\draw[help lines] (0,0) grid (4,1);
\draw[thick] (0,0)--(1,0)--(2,0)--(3,1)--(4,0);
\end{tikzpicture}\\ \hline
\begin{tikzpicture}[scale=.2]
\draw[help lines] (0,0) grid (10,4);
\draw[thick] (0,0)--(3,3)--(4,2)--(6,4)--(10,0);
\node at (0,4.5)  {\color{red!90!black}\ };
\end{tikzpicture} & $\begin{pmatrix} 0 & 0 & 1 & 0\\2&0&0&0\end{pmatrix}$ & $uhdh$ &
\begin{tikzpicture}[scale=.3]
\draw[help lines] (0,0) grid (4,1);
\draw[thick] (0,0)--(1,1)--(2,1)--(3,0)--(4,0);
\end{tikzpicture}\\ \hline
\begin{tikzpicture}[scale=.2]
\draw[help lines] (0,0) grid (10,3);
\draw[thick] (0,0)--(3,3)--(5,1)--(7,3)--(10,0);
\node at (0,3.5)  {\color{red!90!black}\ };
\end{tikzpicture} & $\begin{pmatrix} 0 & 0 & 2 & 0\\0&2&0&0\end{pmatrix}$ & $hudh$ &
\begin{tikzpicture}[scale=.3]
\draw[help lines] (0,0) grid (4,1);
\draw[thick] (0,0)--(1,0)--(2,1)--(3,0)--(4,0);
\end{tikzpicture}\\ \hline
\begin{tikzpicture}[scale=.2]
\draw[help lines] (0,0) grid (10,4);
\draw[thick] (0,0)--(2,2)--(3,1)--(6,4)--(10,0);
\node at (0,4.5)  {\color{red!90!black}\ };
\end{tikzpicture} & $\begin{pmatrix} 0 & 1 & 0 & 0\\3&0&0&0\end{pmatrix}$ & $udhh$ &
\begin{tikzpicture}[scale=.3]
\draw[help lines] (0,0) grid (4,1);
\draw[thick] (0,0)--(1,1)--(2,0)--(3,0)--(4,0);
\end{tikzpicture}\\ \hline
\begin{tikzpicture}[scale=.2]
\draw[help lines] (0,0) grid (10,3);
\draw[thick] (0,0)--(3,3)--(4,2)--(5,3)--(6,2)--(7,3)--(10,0);
\node at (0,3.5)  {\color{red!90!black}\ };
\end{tikzpicture} & $\begin{pmatrix} 0 & 0 & 1 & 1\\1&1&0&0\end{pmatrix}$ & $uudd$ &
\begin{tikzpicture}[scale=.3]
\draw[help lines] (0,0) grid (4,2);
\draw[thick] (0,0)--(1,1)--(2,2)--(3,1)--(4,0);
\end{tikzpicture}\\ \hline
\begin{tikzpicture}[scale=.2]
\draw[help lines] (0,0) grid (10,3);
\draw[thick] (0,0)--(2,2)--(3,1)--(5,3)--(7,1)--(8,2)--(10,0);
\node at (0,3.5)  {\color{red!90!black}\ };
\end{tikzpicture} & $\begin{pmatrix} 0 & 1 & 0 & 2\\2&0&1&0\end{pmatrix}$ & $udud$ &
\begin{tikzpicture}[scale=.3]
\draw[help lines] (0,0) grid (4,1);
\draw[thick] (0,0)--(1,1)--(2,0)--(3,1)--(4,0);
\end{tikzpicture}\\ \hline
\end{tabular}
\end{center}
\caption{The nine Dyck paths of semilength 5 having $L=1$  and their corresponding Motzkin paths of length 4.} \label{L1Figure}
\end{figure}

 We now consider Dyck paths $D \in \D_n$ where $D_{M^*}$ has exactly one $*$. Such Dyck paths have  $L=\binom{r_k+s_k}{s_k}$ where $k$ is the position of $*$ in $D_{M^*}$. We call the set of Dyck paths of semilength $n$ with $L=\binom{r+s}{s}$ obtained in this way $\D_{n}^{r,s}$. 
 
For ease of notation, if $D \in \D_{n}^{r,s}$, define
\begin{itemize}
\item $x(D)$ to be the number of ups before the $*$ in $M^*_D$, and
\item $y(D)$ be the number of downs before the $*$ in $M^*_D$.
\end{itemize}
We can then easily compute the value of $L(D)$ based on $x(D)$ and $y(D)$ as stated in the following observation.

\begin{obs}\label{obsRS} Suppose $D \in \D_{n}^{r,s}$ and write $x=x(D)$ and $y=y(D)$. Then in $M^*_D$, the following are true.
\begin{itemize}
\item The difference in positions of the $(y+1)$st occurrence of either $u$ or $*$ and the $y$th occurrence of $u$ is $r$; or, when $y=0$, the first occurrence of $u$ is in position $r$.
\item The difference in positions of the $(x+2)$nd occurrence of either $d$ or $*$ and the $(x+1)$st occurrence of either $d$ or $*$ is $s$; or, when $x$ is the number of downs in $M^*_D$, the last occurrence of $d$ is in position $n-s$.
\end{itemize}
\end{obs}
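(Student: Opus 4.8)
The plan is to read off both identities directly from the relations \eqref{rs} and \eqref{rs2} once we locate the single $*$ of $M^*_D$ inside the ascent and descent sequences. Write $k$ for the position of the unique $*$, so that $r=r_k$ and $s=s_k$ with both positive. By Definition~\ref{theta}, the entries of $\Asc(D)$ are exactly the positions of the $d$'s and $*$'s of $M^*_D$ together with $n$, listed in increasing order, and the entries of $\Des(D)$ are the positions of the $u$'s and $*$'s together with $n$. The first thing I would establish is the bookkeeping that converts ``index in the ascent/descent sequence'' into ``count of a given letter in $M^*_D$.'' Since exactly $y$ of the letters $d$ occur before position $k$ and the $*$ at position $k$ is itself a $d$-or-$*$ letter, the $*$ is the $(y+1)$st such letter; hence $a_{y+1}=k$. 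Symmetrically, the $*$ is the $(x+1)$st letter among the $u$'s and $*$'s, so $b_{x+1}=k$.

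With these indices in hand, the first bullet is immediate from \eqref{rs}: because $k=a_{y+1}\in\Asc(D)$, we get $r=r_k=b_{y+1}-b_{y}$ (using the convention $b_0=0$). Here $b_{y+1}$ is by definition the position of the $(y+1)$st occurrence of $u$ or $*$. To finish I would argue that $b_y$ is in fact the position of the $y$th occurrence of $u$, not merely of $u$ or $*$; this is where the only genuine input is needed. Because $M^*_D\in\M^*_{n-1}$, deleting the $*$ leaves a Motzkin path, and the prefix $m_1\cdots m_{k-1}$ is a prefix of that path, so it contains at least as many $u$'s as $d$'s, i.e. $x\ge y$. Consequently the $y$th occurrence of $u$ or $*$ lies at or before the $x$th, hence strictly before the $*$, and is therefore a genuine $u$. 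This identifies $b_y$ with the position of the $y$th $u$ and yields the stated difference; the case $y=0$ reduces to $r=b_1$ via $b_0=0$.

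The second bullet is the mirror image and is read off from \eqref{rs2}: since $k=b_{x+1}\in\Des(D)$, we obtain $s=s_k=a_{x+2}-a_{x+1}$. The terms $a_{x+1}$ and $a_{x+2}$ are, by the description of $\Asc(D)$, the positions of the $(x+1)$st and $(x+2)$nd occurrences of $d$ or $*$, which is exactly the asserted difference. For the boundary clause I would observe that if $x$ equals the total number of $d$'s in $M^*_D$, then the letters $d$ and $*$ supply only $x+1$ entries of $\Asc(D)$, so $a_{x+2}$ is the appended value $n$; then $s=n-a_{x+1}$ places the last $d$-or-$*$ entry (a $d$ in the relevant regime) at position $n-s$.

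The step I expect to be the main obstacle is the letter-counting identification in the first bullet — specifically proving that $b_y$ is the position of a true $u$ rather than of the $*$ — since this is the one place where the argument genuinely uses that $M^*_D$ comes from a Motzkin path, through the inequality $x\ge y$. The remaining content is a careful but routine translation of \eqref{rs} and \eqref{rs2} through the index identifications $a_{y+1}=k$ and $b_{x+1}=k$, with the $y=0$ and ``$x$ equals the number of downs'' clauses handled by the conventions $b_0=0$ and $a_{\mathrm{last}}=n$.
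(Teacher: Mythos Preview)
The paper does not supply a proof for this observation; it is stated and immediately followed by an illustrative example, with the justification left implicit in the preceding definitions and equations~\eqref{rs}--\eqref{rs2}. Your argument is correct and makes explicit precisely what the paper takes for granted: locating the unique $*$ as $a_{y+1}=b_{x+1}=k$, applying \eqref{rs} and \eqref{rs2} to obtain $r=b_{y+1}-b_y$ and $s=a_{x+2}-a_{x+1}$, and then using the Motzkin prefix inequality $x\ge y$ to identify $b_y$ with the position of the $y$th genuine $u$. Your handling of the boundary clauses via $b_0=0$ and the appended $n$ in $\Asc(D)$ is also exactly right.
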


\begin{ex} Consider the Dyck path
\[ D = uuuuudduudddduuduudddd. \]
The ascent sequence and descent sequence of $D$ are
\[ \Asc(D) = (5, 7, 9, 11) \quad\text { and } \quad \Des(D) = (2, 6, 7, 11), \]
and the $r$-$s$ array of $D$ is
\[
\left( \begin{array}{cccccccccc}
0 & 0 & 0 & 0 & 1 & 0 & 4 & 0 & 1 & 0 \\
0 & 2 & 0 & 0 & 0 & 2 & 2 & 0 & 0 & 0 \end{array} \right).
\]
There is only one column, column 7, where both entries are nonzero.  Thus, \[ L(D) = {r_7 + s_7 \choose r_7}={4 + 2 \choose 4} = 15, \] and $D \in \D_{11}^{4,2}$. Note also that \[ M^*_D = huhhdu*hdh \] has exactly one $*$. Now let's compute $L(D)$ more directly using Observation~\ref{obsRS}. Notice $x(D) = 2$ and $y(D) = 1$ since there are two $u$'s before the $*$ in $M^*_D$ and one $d$ before the $*$. In this case, the position of the second occurrence of either $u$ or $*$ is 6 and the position of the first occurrence of $u$ is 2, so $r=6-2=4$. Since there are only two downs in $M^*_D$, we note the last $d$ occurs in position 9, so $s=11-9=2$.
\end{ex}
 
 In order to proceed, we need to define the  Motzkin ballot numbers.  The \emph{Motzkin ballot numbers} are the number of Motzkin paths that have their first down step in a fixed position. These numbers appear in \cite{Aigner98} and are similar to the well-known Catalan ballot numbers (see \cite{Brualdi}).  If $n \geq k$, we let $\mathcal{T}_{n,k}$ be the set of Motzkin paths of length $n$ with the first down in position $k$, and we define $\T_{k-1, k}$ to be the set containing the single Motzkin path consisting of $k-1$ horizontal steps.
 
Given any Motzkin path $M$, define the \emph{reverse of $M$}, denoted $M^R$, to be the Motzkin path found be reading $M$ in reverse and switching $u$'s and $d$'s. For example, if $M=huuhdhd$, $M^R = uhuhddh$.  Given $M \in \mathcal{T}_{n,k}$, the Motzkin path $M^R$ has its last up in position $n-k+1$. 
 
The following lemma gives the generating function for the Motzkin ballot numbers $T_{n,k} = |\mathcal{T}_{n,k}|$.

\begin{lem} \label{lemGFt}
For positive integers $n \geq k$, let $T_{n,k} = |\T_{n,k}|$.  Then for a fixed $k$, the generating function for $T_{n,k}$ is given by
\[  \sum_{n=k-1}^{\infty} T_{n,k}x^n =  \left(1+xm(x)\right)^{k-1}x^{k-1}. \]
\end{lem}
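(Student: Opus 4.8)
The plan is to work with the generating function $F_k(x) := \sum_{n=k-1}^{\infty} T_{n,k}x^n$ and to establish the single recursion
\[ F_k(x) = \bigl(x + x^2 m(x)\bigr)\,F_{k-1}(x) \qquad (k \geq 2), \]
where $m(x) = \sum_{n\geq 0} M_n x^n$ is the Motzkin generating function, together with the base case $F_1(x) = 1$. Since $\T_{0,1}$ is the empty path and no path of positive length can have its first down step in position $1$, we have $T_{0,1}=1$ and $T_{n,1}=0$ for $n\geq 1$, so $F_1(x)=1$ is immediate. Noting that $x^{k-1}\bigl(1+xm(x)\bigr)^{k-1} = \bigl(x + x^2 m(x)\bigr)^{k-1}$, a straightforward induction on $k$ then yields the claimed formula. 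So the entire content is the recursion.

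To prove the recursion I would fix $k\geq 2$ and split $\T_{n,k}$ (including the degenerate case $n=k-1$) according to the first step $m_1$ of $M\in\T_{n,k}$, which must be $h$ or $u$ since the first down step is at position $k\geq 2$. In the case $m_1=h$, I write $M = h M'$; deleting the initial $h$ shifts all positions down by one, so $M'$ is a Motzkin path whose first down step is at position $k-1$, giving a bijection $M\mapsto M'$ onto $\T_{\,\cdot\,,\,k-1}$ and a contribution of $xF_{k-1}(x)$. In the case $m_1=u$, I let $q$ be the position of the down step matching this first up step, i.e.\ the first return of $M$ to height $0$, and decompose $M = u\,\tilde P\,d\,R$, where $\tilde P = m_2\cdots m_{q-1}$ is the (elevated) sub-path lying at height $\geq 1$ and $R = m_{q+1}\cdots m_n \in \M_{n-q}$ is the tail. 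Lowering $\tilde P$ by one unit produces a Motzkin path $P\in\M_{q-2}$, and because the first down step of $M$ (at position $k$) is exactly the first down step inside $u\,\tilde P\,d$, the path $P$ has its first down step at position $k-1$, that is $P\in\T_{q-2,\,k-1}$. Conversely every pair $(P,R)\in\T_{s,k-1}\times\M_t$ reconstructs a unique $M = u\,\tilde P\,d\,R\in\T_{2+s+t,\,k}$ with $m_1=u$, so this case contributes $x\cdot F_{k-1}(x)\cdot x\cdot m(x) = x^2 m(x)F_{k-1}(x)$. Adding the two cases gives the recursion.

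The main obstacle I anticipate is handling the degenerate boundary case so that the position bookkeeping is exactly right, and this is precisely where the convention $\T_{k-1,k}=\{h^{k-1}\}$ earns its keep. When $\tilde P$ contains no down step, so that $\tilde P = h^{q-2}$ and the first down step of $M$ is the matching step itself (hence $k=q$), the lowered path is $P = h^{k-2}$; although $P$ has no genuine down step, $q-2 = k-2 = (k-1)-1$, so $P$ is exactly the single all-horizontal path in $\T_{(k-1)-1,\,k-1}$ under the stated convention. The analogous degenerate instance occurs in the $m_1=h$ case when $M=h^{k-1}$ maps to $M'=h^{k-2}\in\T_{k-2,k-1}$. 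Checking that these degenerate images fall into the correct sets—rather than being lost or double-counted—is the only genuinely delicate point; once it is verified, the recursion holds for all $k\geq 2$ and the induction completes the proof.
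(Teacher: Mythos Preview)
Your proof is correct. The paper, however, takes a more direct route: rather than induct on $k$, it decomposes any $M\in\T_{n,k}$ in one shot as
\[
a_1a_2\cdots a_{k-1}\,\alpha_1\alpha_2\cdots\alpha_{k-1},
\]
where $a_1,\ldots,a_{k-1}\in\{h,u\}$ are the first $k-1$ steps, and the tail (a path from height $m$ back to $0$, $m$ being the number of $u$'s among the $a_i$) is factored by first passages to successively lower heights, so that $a_i=h$ is paired with $\alpha_i$ empty and $a_i=u$ is paired with $\alpha_i=dM_i$ for an arbitrary Motzkin word $M_i$. This exhibits the generating function as the product $(x+x^2m(x))^{k-1}$ immediately, with no recursion and no base case to check. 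Your argument is the one-step-at-a-time version of the same idea: peeling off $m_1$ and, when $m_1=u$, applying a first-return decomposition extracts exactly one factor $x+x^2m(x)$ and leaves a path in $\T_{\cdot,\,k-1}$. Note that the two bijections are genuinely different (your $u$ at position $1$ is paired with the \emph{first return} to height $0$, whereas in the paper's pairing the first $u$ is associated with the \emph{first} $d$ in the tail), even though they yield the same factorization. The paper's approach is shorter; yours makes the recursive structure explicit and, through your careful handling of the all-horizontal degenerate cases, clarifies why the convention $\T_{k-1,k}=\{h^{k-1}\}$ is exactly what is needed.
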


\begin{proof}
Consider a Motzkin path of length $n$ with the first down in position $k$.  It can be rewritten as 
\[ a_1a_2\cdots a_{k-1} \alpha_1 \alpha_2 \cdots \alpha_{k-1} \]
where either
\begin{itemize}
\item $a_i = f$ and $\alpha_i$ is the empty word, or
\item $a_i = u$ and $\alpha_i$ is $dM_i$ for some Motzkin word $M_i$,
\end{itemize}
for any $1 \leq i \leq k-1$.
The generating function is therefore $(x + x^2m(x))^{k-1}$.
\end{proof}

In later proofs we decompose certain Motzkin paths as shown in the following definition.

\begin{defn} \label{PrPs} Let $r$, $s$, and $n$ be positive integers with $n \geq r+ s -2$, and let $P \in \mathcal{T}_{n, r+s-1}$. Define $P_s$ to be the maximal Motzkin subpath in $P$ that begins at the $r$th entry, and define $P_r$ be the Motzkin path formed by removing $P_s$ from $P$.
\end{defn}

Given $P \in \mathcal{T}_{n, r+s-1}$, notice that $P_r \in \mathcal{T}_{\ell, r}$ for some $r-1 \leq \ell \leq n-s + 1$ and $P_s \in \mathcal{T}_{n-\ell, s}$. In other words, the first down in $P_s$ must be in position $s$ (or $P_s$ consists of $s-1$ horizontal steps), and the first down in $P_r$ must be in position $r$  (or $P_r$ consists of $r-1$ horizontal steps). This process is invertible as follows.  Given $P_r \in \mathcal{T}_{\ell,r}$ and $P_s \in \mathcal{T}_{n-\ell,s}$, form a Motzkin path $P \in \mathcal{T}_{n, r+s-1}$ by inserting $P_s$ after the $(r-1)$st element in $P_r$.  

Because this process is used extensively in subsequent proofs, we illustrate this process with an example below.

\begin{ex} \label{exBreakM} Let $r=3$, $s=4$, and $n=13$.  Suppose $P = uhuhhdhhdudud \in \mathcal{T}_{13,6}$.  By definition, $P_s$ is the maximal Motzkin path obtained from $P$ by starting at the 3rd entry: 
\[ P = uh\framebox{$uhhdhh$}dudud. \]
Thus, $P_s = uhhdhh \in \mathcal{T}_{6, 4}$ as seen in the boxed subword of $P$  above, and $P_r =  uhdudud \in \mathcal{T}_{7, 3}$. 
Conversely, given $P$ as shown above and $r=3$, we note that the maximal Motzkin path in $P_s$ starting at position 3 is exactly the boxed part $P_s$.
\end{ex}

Using the Motzkin ballot numbers and this decomposition of Motzkin paths, we can enumerate the set of Dyck paths in $\mathcal{D}_n^{r,s}$. These are enumerated by first considering the number of returns. Suppose a Dyck path $D \in \D_n$ has a return after $2k$ steps with $k < n$. Then $r_k(D)$ is the length of the ascent starting in position $2k+1$, and $s_k(D)$ is the length of the descent ending where $D$ has a return. Thus, the binomial coefficient ${r_k+ s_k \choose r_k} > 1$. This implies that if $D \in  \mathcal{D}_n^{r,s}$, it can have at most two returns (including the end). Dyck paths in $\mathcal{D}_n^{r,s}$ that have exactly two returns are counted in Lemma~\ref{RSHit2}, and those that have a return only at the end are counted in Lemma~\ref{RSHit1}.

\begin{lem}\label{RSHit2} For $r\geq 1, s\geq 1$, and $n\geq r+s$,  the number of Dyck paths $D \in \D_n^{r,s}$ that have two returns is $T_{n-2, r+s-1}$.
\end{lem}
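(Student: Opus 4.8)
The plan is to exhibit a bijection between the Dyck paths counted by the lemma and the Motzkin ballot paths in $\T_{n-2,\,r+s-1}$, routed through the word $M^*_D$ and the decomposition of Definition~\ref{PrPs}.

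First I would translate the two-returns condition into a statement about $M^*_D$. Since $D \in \D_n^{r,s}$, the word $M^*_D \in \M^*_{n-1}$ has exactly one $*$, say in position $k$; deleting it yields a genuine Motzkin path in $\M_{n-2}$. Writing $x = x(D)$ and $y = y(D)$ for the number of $u$'s and $d$'s preceding the $*$, the $*$ is the $(y+1)$st peak and the $(x+1)$st valley of $D$, so it records an \emph{interior} return exactly when $x = y$; equivalently, the underlying Motzkin path is at height $0$ at the insertion point. Using the observation preceding the lemma, that every interior return produces a column of the $r$-$s$ array with two nonzero entries (hence a $*$), and that $D$ carries only one $*$, I conclude that $D$ has two returns precisely when $x=y$. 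In that case $M^*_D$ factors as $Q\,*\,R$, where $Q \in \M_{k-1}$ and $R \in \M_{n-1-k}$ are star-free Motzkin paths, since $Q$ returns to height $0$ at the insertion point.

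Next I would read $r$ and $s$ off of $Q$ and $R$ via Observation~\ref{obsRS}. Because $Q$ holds the first $y$ occurrences of $u$ and no $*$, the $(y+1)$st occurrence of $u$-or-$*$ is the $*$ itself, so $r$ is $k$ minus the position of the last $u$ in $Q$; reversing, this says the first down step of $Q^R$ sits in position $r$, i.e. $Q^R \in \T_{k-1,\,r}$. Symmetrically, the $(x+2)$nd occurrence of $d$-or-$*$ is the first $d$ of $R$, so $s$ is the position of the first down step of $R$, i.e. $R \in \T_{n-1-k,\,s}$. The degenerate cases, when $Q$ has no $u$ or $R$ has no $d$, coincide exactly with the all-horizontal convention $\T_{k-1,k}$, so they are absorbed cleanly.

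Finally I would assemble the bijection. The assignment $D \mapsto (Q^R,\,R)$ sends two-return paths in $\D_n^{r,s}$ to pairs $(P_r,P_s) \in \T_{\ell,r}\times \T_{n-2-\ell,s}$ with $\ell = k-1$, and it inverts: from such a pair one recovers $Q = P_r^R$ and $R = P_s$, forms $M^* = Q\,*\,R$, and sets $D = D_{M^*}$, whose unique starred column is $\binom{r+s}{r}$ with $x=y$, hence two returns. Composing with the decomposition of Definition~\ref{PrPs}, which is a bijection between $\T_{n-2,\,r+s-1}$ and exactly these pairs $(P_r,P_s)$, gives $|\{D \in \D_n^{r,s} : D \text{ has two returns}\}| = T_{n-2,\,r+s-1}$, equivalently $\sum_{\ell} T_{\ell,r}\,T_{n-2-\ell,s} = T_{n-2,\,r+s-1}$. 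The main obstacle I expect is the third paragraph: pinning down, through Observation~\ref{obsRS}, that the parameter $r$ is encoded by the tail of $Q$ (via $Q^R$) while $s$ is encoded by the head of $R$, and checking the boundary conventions so that the map lands in the correct ballot sets and is genuinely inverse to the Definition~\ref{PrPs} decomposition.
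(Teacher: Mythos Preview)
Your proposal is correct and follows essentially the same route as the paper: both arguments factor $M^*_D$ as $Q\,*\,R$ with $Q,R$ Motzkin paths (equivalently, establish $x=y$), identify $Q^R\in\T_{k-1,r}$ and $R\in\T_{n-1-k,s}$ via Observation~\ref{obsRS}, and then compose with the $P_r/P_s$ decomposition of Definition~\ref{PrPs} to obtain the bijection with $\T_{n-2,\,r+s-1}$. The only cosmetic difference is that the paper builds the map starting from $P\in\T_{n-2,r+s-1}$ and then inverts, whereas you start from $D$; the underlying bijection is identical.
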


\begin{proof} We will find a bijection between the set of Dyck paths in $\D_n^{r,s}$ that have exactly two returns and $\mathcal{T}_{n-2, r+s-1}$. First, suppose $P \in \mathcal{T}_{n-2, r+s-1}$. Thus, there is some $r-1 \leq \ell \leq n-s+1$ so that $P_r \in \mathcal{T}_{\ell, r}$ and $P_s \in \mathcal{T}_{n-2-\ell, s}$ where $P_r$ and $P_s$ are as defined in Definition~\ref{PrPs}. 

Now create the modified Motzkin word $M^* \in \M_{n-1}^*$ by concatenating the reverse of $P_r$, the letter $*$, and the word $P_s$; that is, $M^* = P_r^R*P_s$. Because $P_r$ and $P_s$ have a combined total length of $n-2$, the modified Motzkin word $M^*$ is length $n-1$. Let $D = D_{M^*}$ as defined in Definition~\ref{theta} and let $x = x(D)$ and $y= y(D)$. Since $M^*$ has only the Motzkin word $P_r^R$ before $*$, we have $x=y$ and $D$ must have exactly two returns.

Using Observation~\ref{obsRS}, we can show that $D \in \D_n^{r,s}$ as follows. The $(y+1)$st occurrence of either a $u$ or $*$ is the $*$ and the $y$th occurrence of $u$ is the last $u$ in $P_r^R$; the difference in these positions is $r$. Also, the $(x+1)$st occurrence of either a $d$ or $*$ is the $*$ and the $(x+2)$nd occurrence of either a $d$ or $*$ is the first $d$ in $P_s$; the difference in these positions is $s$.

To see that this process is invertible, consider any Dyck path $D\in\D_n^{r,s}$ that has exactly two returns. Since $D\in\D_n^{r,s}$, $M^*_D$ has exactly one $*$. Furthermore, since $D$ has a return after $2k$ steps for some $k < n$, it must be that $*$ decomposes $M^*_D$ into two Motzkin paths.  That is, the subword of $M^*_D$ before the $*$ is a Motzkin path as well as the subword of $M^*_D$ after the $*$.  We will call the subword of $M^*_D$ consisting of the first $k-1$ entries $M_r$ and the subword of $M^*_D$ consisting of the last $n-1-k$ entries $M_s$.

Since $r_k=r$ and there are the same number of ups and downs before the $*$ in $M^*_D$, the last up before $*$ must be in position $k-r$.  Similarly, since $s_k=s$, the first down after $*$ must be in position $k+s$. Thus, $M_r^R \in \T_{k-1,r}$ and $M_s \in \T_{n-1-k, s}$. Let $P$ be the Motzkin path formed by inserting $M_s$ after the $(r-1)$st element in $M_r^R$. Then $P \in \T_{n-2, r+s-1}$ as desired. \end{proof}

The following example shows the correspondence.

\begin{ex} Let $r=3$, $s=4$, and $n=15$.  Suppose $P = uhuhhdhhdudud \in \mathcal{T}_{13,6}$. The corresponding Dyck path $D \in \D_{15}^{3, 4}$ is found as follows. First, find $P_r =  uhdudud$ and $P_s = uhhdhh$ as in Example~\ref{exBreakM}. Then let $M^* = P_r^R*P_s$ or
\[ M^* = ududuhd*uhhdhh.\]
Letting $D = D_{M^*}$, we see that $x(D) = y(D) = 3$.  The fourth occurrence of either $u$ or $*$ is the $*$ in position $8$, and the third occurrence of $u$ is in position $5$, so $r=8-5=3$. Similarly, the fourth occurrence of either $d$ or $*$ is the $*$ in position 8, and the fifth occurrence of $d$ is in position 12, so $s=12-8=4$ as desired.

\sloppypar{For completion, we write the actual Dyck path $D$ using Definition~\ref{theta} by first seeing $\Asc(D)~=~(2, 4, 7, 8, 12,15)$ and $\Des(D) = (1, 3, 5, 8, 9, 15)$. Thus}
\[ D = uuduudduuudduddduuuuduuudddddd.\]
\end{ex}

Lemma~\ref{RSHit2} counted the Dyck paths in $\D_n^{r,s}$ that have exactly two returns;  the ensuing lemma counts those Dyck paths in $\D_n^{r,s}$ that have only one return (at the end).

\begin{lem} \label{RSHit1} For $r\geq 1, s\geq 1$, and $n\geq r+s+2$,  the number of Dyck paths $D \in \D_n^{r,s}$ that only have a return at the end is \[ \sum_{i=0}^{n-2-s-r}(i+1)M_i T_{n-4-i, r+s-1}. \]
\end{lem}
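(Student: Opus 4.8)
The plan is to count these paths through the modified-Motzkin-word correspondence of Definition~\ref{theta}, exactly as in Lemma~\ref{RSHit2}, but now arranging the single $*$ so that $D$ has \emph{no} interior return. First I would record the structural dictionary. As noted in the paragraph preceding Lemma~\ref{RSHit2}, every interior return of $D$ forces a $*$ at that position, and conversely a $*$ at position $k$ produces an interior return precisely when the prefix of $M^*_D$ before it is balanced (equal numbers of $u$'s and $d$'s). Since $D\in\D_n^{r,s}$ has a single $*$, the paths counted here are exactly those whose word factors as $M^*=A*B$ with $A$ a Motzkin \emph{prefix} ending at some positive height $h_0=x(D)-y(D)\ge 1$ and $AB\in\M_{n-2}$ an honest Motzkin path. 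I would then translate $r$ and $s$ using Observation~\ref{obsRS}: because $x>y$ here, $r$ is the gap between the $y$th and $(y+1)$st up step \emph{inside} $A$, and $s$ is the gap between the $h_0$th and $(h_0+1)$st down step \emph{inside} $B$ (with the two boundary conventions of Observation~\ref{obsRS} when $y=0$ or when $B$ contains no further down step).

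The heart of the argument is a bijection refining the reversal of Lemma~\ref{RSHit2} together with the recombination of Definition~\ref{PrPs}. In the balanced case of Lemma~\ref{RSHit2}, all $n-2$ non-$*$ letters were used to build a single ballot path in $\T_{n-2,r+s-1}$. Here the positive height $h_0$ is exactly the obstruction to an interior return, and my plan is to strip it off: isolate the sub-excursion of $A*B$ that is \emph{not} pinned down by the $r$- and $s$-gaps --- this is a free Motzkin path $\mu$ of some length $i$, together with a marked vertex (one of its $i+1$ lattice points) recording where it was attached --- and show that what remains, after the same reverse-and-recombine operation, is a Motzkin ballot path $P\in\T_{n-4-i,r+s-1}$, two further letters being forced by the indecomposable ``wrapper'' and carrying no choice. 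The marked vertex contributes $i+1$, the free path contributes $M_i$, and $P$ contributes $T_{n-4-i,r+s-1}$; summing over admissible $i$ yields the formula. The length bookkeeping $i+(n-4-i)+2+1=n-1$ matches $|M^*|=n-1$, and the bounds $0\le i\le n-2-r-s$ are exactly the range in which $\T_{n-4-i,r+s-1}$ is nonempty.

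I expect the construction of this bijection to be the main obstacle, for two reasons. First, the excess height $h_0$ is unbounded, yet it must be absorbed into a \emph{single} marked Motzkin path $\mu$; getting the attachment point and the reversal to cooperate so that no information about $h_0$ is lost or double counted is the delicate step, and the boundary cases of Observation~\ref{obsRS} must be checked to match the endpoints $i=0$ and $i=n-2-r-s$. Second, one must verify that the recombined core really has its first down step in position $r+s-1$, so that it lands in $\T_{n-4-i,r+s-1}$ rather than another ballot class.

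As a check on the factor, and as a possible alternative proof, I would pass to generating functions via Lemma~\ref{lemGFt}. The two-return count $T_{n-2,r+s-1}$ has generating function $x^{r+s}(1+xm(x))^{r+s-2}$, while the sum in the statement has generating function $x^{r+s+2}\bigl(xm(x)\bigr)'(1+xm(x))^{r+s-2}=x^2\bigl(xm(x)\bigr)'\cdot x^{r+s}(1+xm(x))^{r+s-2}$, since $\sum_i(i+1)M_ix^i=\bigl(xm(x)\bigr)'$ is the generating function for Motzkin paths carrying a marked vertex. Thus the one-return count differs from the two-return count by the single extra factor $x^2\bigl(xm(x)\bigr)'$, whose shape --- two forced steps ($x^2$) times a marked Motzkin structure --- is precisely what the bijection above should realize; confirming that this is the correct factor (rather than merely computing it) is the real content.
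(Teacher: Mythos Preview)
Your plan is essentially the paper's own argument: it too constructs a bijection with triples $(M,j,P)$ where $M\in\M_i$ carries a marked position $1\le j\le i+1$ and $P\in\T_{n-4-i,\,r+s-1}$, splitting $P$ into $(P_r,P_s)$ via Definition~\ref{PrPs} and then inserting $P_r^R\,u$ and $d\,P_s$ (your two ``forced'' letters together with the ballot pieces) into the marked word $\overline{M}^*$ at the positions dictated by Observation~\ref{obsRS}. Your generating-function cross-check isolating the factor $x^2\bigl(xm(x)\bigr)'$ is a nice addition that the paper leaves implicit until the corollary following Proposition~\ref{oneterm}.
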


\begin{proof}
Consider a pair of Motzkin paths, $M$ and $P$, where $M$ is length $i$ with $0 \leq i \leq n-2-s-r$, and $P \in \mathcal{T}_{n-4-i, r+s-1}$. For each such pair, we consider $1 \leq j \leq i+1$ and find a corresponding Dyck path $D\in\D_n^{r,s}$. Thus, there will be $i+1$ corresponding Dyck paths for each pair $M$ and $P$.  Each Dyck path $D$ will have exactly one $*$ in $M^*_D$.

We begin by letting $\ol{M}^*$ be the modified Motzkin path obtained by inserting $*$ before the $j$th entry in $M$ or at the end if $j=i+1$. Let  $\ol{x}$ be the number of ups before the $*$ in  $\ol{M}^*$, and let $\ol{y}$ be the  number of downs before the $*$ in  $\ol{M}^*$.  

Recall that by Definition~\ref{PrPs}, there is some $r-1 \leq \ell \leq n-3-s-i$ so that $P$ can be decomposed into $P_r \in \mathcal{T}_{\ell, r}$ and $P_s \in \mathcal{T}_{n-4-i-\ell, s}$. We now create a modified Motzkin word, $M^* \in \M^*_{n-1}$ by inserting one $u$, one $d$, $P_r^R$, and $P_s$ into $\ol{M}^*$ as follows. 
\begin{enumerate}
\item Insert a $d$ followed by $P_s$ immediately before the  $(\ol{x}+1)$st $d$ in $\ol{M}^*$ or at the end if $\ol{x}$ is equal to the number of downs in $\ol{M}^*$.
\item Insert the reverse of $P_r$ followed by $u$ after the $\ol{y}$th $u$ or at the beginning if $\ol{y}=0$.
\end{enumerate}
Call the resulting path $M^*$. We claim that $D_{M^*}\in \mathcal{D}_n^{r,s}$ and that $D_{M^*}$ only has one return at the end. For ease of notation, let $D = D_{M^*}, x=x(D)$, and  $y=y(D)$.  Notice that the number of downs (and thus the number of ups) in $P_r$ is $y-\ol{y}$. Then the $(y+1)$st $u$ or $*$ in $M^*$ is the inserted $u$ following $P_r^R$ from Step (2), and the $y$th $u$ is the last $u$ in $P_r^R$. The difference in these positions is $r$. Similarly, the $(x+1)$st $d$ or $*$ in $M^*$ is the inserted $d$ before the $P_s$ from Step (1), and the $(x+2)$nd $d$ or $*$ in $M^*$ is the first down in $P_s$.  The difference in these positions is $s$, and thus by Observation~\ref{obsRS}, $D \in \mathcal{D}_n^{r,s}$.

To see that $D$ only has one return at the end, we note that  the only other possible place $D$  can  have a return is after $2k$ steps where $k = \ell + j + 1$, the position of $*$ in $M^*$.  However, $x > y$ so $D$  only has one return at the end.

We now show that this process is invertible. Consider any Dyck path $D\in\D_n^{r,s}$ that has one return at the end. Since $D$ only has one return at the end, the $*$ does not decompose $M^*_D$ into two Motzkin paths, and we must have $x(D)>y(D)$.

Let $P_1$ be the maximal Motzkin word immediately following the $(x+1)$st occurrence of $d$ or $*$ in $M^*_D$. Note that $P_1$ must have its first down in position $s$ or $P_1$ consists of $s-1$ horizontal steps. Let $P_2$ be the maximal Motzkin word preceding the $(y+1)$st up in $M^*$. Then either $P_2$ consists of $r-1$ horizontal step or the last $u$ in $P_2$ is $r$ from the end; that is, the first $d$ in $P_2^R$ is in position $r$.

Since $x>y$, the $(y+1)$st $u$ comes before the $x$th $d$. Thus, deleting the $*$, the $(y+1)$st $u$, the $x$th $d$, $P_1$, and $P_2$ results in a Motzkin path we call $M$. Note that if $M$ is length $i$, then the combined lengths of $P_1$ and $P_2$ is length $n-4-i$. This inverts the process by letting $P_s=P_1$ and $P_r=P_2^R.$  \end{proof}

 We again illustrate the correspondence from the above proof with an example.

 \begin{ex}
 Let $r=3$, $s=4$, $n=24$, and consider the following pair of Motzkin paths
 \[ M = uudhudd \quad \text{ and } \quad P = uhuhhdhhdudud. \]
 As in Example~\ref{exBreakM}, $P_r =  uhdudud$ and $P_s = uhhdhh$. Following the notation in the proof of Lemma~\ref{RSHit1}, we have $i = 7$. Our goal is to find $8$ corresponding Dyck paths for each $1 \leq j \leq 8$. If $j = 1$, we first create $\ol{M}^*$ by inserting $*$ before the 1st entry in M:
 \[ \ol{M}^* = *uudhudd.\]
Now there are $\ol{x} = 0$ ups and $\ol{y}=0$ downs before the $*$ in $\ol{M}^*$. Thus, we form $M^*$ by inserting $P^R_ru$ at the beginning of $\ol{M}^*$ and $dP_s$ immediately before the $1$st down in $\ol{M}^*$ yielding
 \[ M^*= \framebox{$ududuhd$}\ \bm{u}* uu\bm{d}\ \framebox{$uhhdhh$}\ dhudd. \]
 The paths $P_r^R$ and $P_s$ are boxed in the above notation and the inserted $u$ and $d$ are in bold.
 
 If $D=D_{M^*}$, then $x(D) = 4$ and $y(D) = 3$ because there are four $u$'s and three $d$'s before $*$ in $M^*$. The $(y+1)$st (or fourth) occurrence of $u$ or $*$ in $M^*$ is the bolded $u$ in position 8, and the third occurrence of $u$ is the last $u$ in $P_r^R$ in position 5;  thus $r=3$. Similarly, the $(x+2)$nd (or sixth) occurrence of $d$ or $*$ is the first $d$ in $P_s$ in position 16, and the fifth occurrence of $d$ or $*$ is the bolded $d$ in position 12 giving us $s=4$. It is clear that $D$ only has one return since $x > y$.
 
 This process can be followed in the same manner for $2 \leq j \leq 8$ to find all $8$ corresponding Dyck paths for the pair $M$ and $P$.  The table in Figure~\ref{RSEx2} shows these paths.
\end{ex}

 \begin{figure}
 \begin{center}
 {\renewcommand{\arraystretch}{2}
 \begin{tabular}{c|c|c|c|c}
 $j$ & $\ol{M}^*$ & $\ol{x}$ & $\ol{y}$ & $M^*$ \\ \hline
 1 & $*uudhudd$ & 0 & 0 & $ \framebox{$ududuhd$}\ \bm{u}* uu\bm{d}\ \framebox{$uhhdhh$}\ dhudd$\\ \hline
  2 & $u*udhudd$ & 1 & 0 & $ \framebox{$ududuhd$}\ \bm{u} u*udhu\bm{d}\ \framebox{$uhhdhh$}\ dd$\\ \hline
   3 & $uu*dhudd$ & 2 & 0 & $ \framebox{$ududuhd$}\ \bm{u} uu*dhud\bm{d}\ \framebox{$uhhdhh$}\ d$\\ \hline
    4 & $uud*hudd$ & 2 & 1 & $u \framebox{$ududuhd$}\ \bm{u}ud*hud\bm{d}\ \framebox{$uhhdhh$}\ d$\\ \hline
     5 & $uudh*udd$ & 2 & 1 & $u\framebox{$ududuhd$}\ \bm{u}udh*ud\bm{d}\ \framebox{$uhhdhh$}\ d$\\ \hline
      6 & $uudhu*dd$ & 3 & 1 & $u\framebox{$ududuhd$}\ \bm{u}udhu*dd\bm{d}\ \framebox{$uhhdhh$}\ $\\ \hline
       7 & $uudhud*d$ & 3 & 2 & $uu \framebox{$ududuhd$}\ \bm{u}dhud*d\bm{d}\ \framebox{$uhhdhh$}\ $\\ \hline
        8 & $uudhudd*$ & 3 & 3 & $uudhu\framebox{$ududuhd$}\ \bm{u}dd*\bm{d}\ \framebox{$uhhdhh$}\ $\\ \hline
 \end{tabular}}
 \end{center}
\caption{Given $r=3,$ $s=4,$ $n=24$, and the pair of Motzkin paths $M~=~uudhudd \in \M_7$ and $P = uhuhhdhhdudud \in \T_{13, 6}$, the Dyck words formed by $D_{M^*}$ are the 8 corresponding Dyck paths in $\D_{24}^{3,4}$ that only have one return.} \label{RSEx2}
 \end{figure}

By combining Lemmas~\ref{RSHit2} and \ref{RSHit1}, we have the following proposition which enumerates $\D_n^{r,s}$.
 
 \begin{prop} \label{oneterm} For $r\geq 1, s\geq 1$, and $n\geq r+s$,  the number of Dyck paths $D \in \D_n^{r,s}$ is 
\[ |\D_n^{r,s}| =T_{n-2,r+s-1} + \sum_{i=0}^{n-2-s-r}(i+1)M_i T_{n-4-i, r+s-1}.\]
\end{prop}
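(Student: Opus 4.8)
The plan is to prove the proposition as an immediate consequence of the two preceding lemmas by partitioning $\D_n^{r,s}$ according to the number of returns. The key fact, already recorded in the discussion preceding Lemma~\ref{RSHit2}, is that an \emph{internal} return of $D$ --- a return after $2k$ steps with $k<n$ --- forces both $r_k(D)>0$ and $s_k(D)>0$, hence places a $*$ in position $k$ of $M^*_D$. First I would note that every Dyck path has a return at position $2n$, and that since $D\in\D_n^{r,s}$ has exactly one $*$ in $M^*_D$, it can have at most one internal return. Therefore every such $D$ has either one return (only at the end) or exactly two returns (one internal together with the one at the end).

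These two classes are mutually exclusive and exhaustive, so $|\D_n^{r,s}|$ is the sum of their cardinalities. The count of the two-return class is supplied by Lemma~\ref{RSHit2} as $T_{n-2,\,r+s-1}$, and the count of the one-return class is supplied by Lemma~\ref{RSHit1} as $\sum_{i=0}^{n-2-s-r}(i+1)M_i T_{n-4-i,\,r+s-1}$. Adding these two expressions yields the stated formula, so the entire argument reduces to invoking the two lemmas and performing this bookkeeping.

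The one point deserving a little care is the hypothesis range: Lemma~\ref{RSHit2} applies for $n\geq r+s$, whereas Lemma~\ref{RSHit1} is stated for $n\geq r+s+2$. I would observe that when $n=r+s$ or $n=r+s+1$ the upper index $n-2-s-r$ is negative, so the sum is empty and contributes $0$; this matches the fact that no one-return path in $\D_n^{r,s}$ can exist for such small $n$, since $P\in\mathcal{T}_{n-4-i,\,r+s-1}$ requires $i\leq n-2-r-s$. Hence the combined formula holds throughout the asserted range $n\geq r+s$. I do not expect any genuine obstacle here, as all the substantive bijective content lives in Lemmas~\ref{RSHit2} and~\ref{RSHit1}; this proposition is simply the assembly step that joins them.
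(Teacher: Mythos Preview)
Your proposal is correct and follows essentially the same approach as the paper: partition $\D_n^{r,s}$ by the number of returns (at most two, since a single $*$ in $M^*_D$ allows at most one internal return) and add the counts from Lemmas~\ref{RSHit2} and~\ref{RSHit1}. Your additional care about the hypothesis range $n\in\{r+s,\,r+s+1\}$, where the sum is empty, is a nice touch that the paper leaves implicit.
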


\begin{proof}
Dyck paths in $\mathcal{D}_n^{r,s}$ can have at most two returns. Thus, this is a direct consequence of Lemmas ~\ref{RSHit2} and \ref{RSHit1}. \end{proof}

Interestingly, we remark that the formula for $|\D_n^{r,s}|$ only depends on the sum $r+s$ and not the individual values of $r$ and $s$. For example, $|\D_n^{1,3}| = |\D_n^{2,2}|$. Also, because the formula for $|\D_n^{r,s}|$ is given in terms of Motzkin paths, we can easily extract the generating function for these numbers using Lemma~~\ref{lemGFt}.

\begin{cor} For $r, s \geq 1$, the generating function for $|\D_n^{r,s}|$ is
\[ x^{r+s}(1+xm(x))^{r+s-2}\left(1 + x^2(xm(x))' \right). \]
\end{cor}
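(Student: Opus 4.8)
The plan is to start from the closed form for $|\D_n^{r,s}|$ established in Proposition~\ref{oneterm}, namely
\[ |\D_n^{r,s}| = T_{n-2,r+s-1} + \sum_{i=0}^{n-2-s-r}(i+1)M_i\, T_{n-4-i,\,r+s-1}, \]
and convert the generating function $\sum_n |\D_n^{r,s}| x^n$ term by term. First I would recall from Lemma~\ref{lemGFt} that for fixed $k$ the Motzkin ballot numbers satisfy $\sum_{m} T_{m,k} x^m = \bigl(x + x^2 m(x)\bigr)^{k-1} = x^{k-1}\bigl(1 + x\,m(x)\bigr)^{k-1}$. Here the relevant index is $k = r+s-1$, so the basic ingredient is $\sum_m T_{m,\,r+s-1} x^m = x^{r+s-2}(1+xm(x))^{r+s-2}$.

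The first term $T_{n-2,r+s-1}$ contributes, after shifting the index by $2$, a factor of $x^2$ times the ballot generating function, giving $x^{r+s}(1+xm(x))^{r+s-2}$; this already produces the leading factor that appears in the claimed answer. For the second term I would treat the sum $\sum_i (i+1) M_i\, T_{n-4-i,r+s-1}$ as a Cauchy product of two sequences: the sequence $(i+1)M_i$ and the shifted ballot sequence $T_{n-4-i,r+s-1}$. The generating function of $(i+1)M_i$ is $\frac{d}{dx}\bigl(x\,m(x)\bigr) = \sum_i (i+1)M_i x^i$, which I would write as $(xm(x))'$; multiplying the Cauchy product then contributes the extra shift $x^4$ from the offset $n-4-i$, yielding $x^4 (xm(x))' \cdot x^{r+s-2}(1+xm(x))^{r+s-2}$.

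Summing the two contributions, I would factor out the common $x^{r+s}(1+xm(x))^{r+s-2}$ to obtain
\[ x^{r+s}(1+xm(x))^{r+s-2}\bigl(1 + x^2 (xm(x))'\bigr), \]
which is exactly the stated generating function. The only genuinely delicate points are bookkeeping ones: making sure the index shifts ($n-2$, $n-4-i$) line up correctly with the lower limits of the sums so that no spurious low-order terms are introduced, and justifying that the Cauchy-product manipulation is valid as a formal power series identity. I expect the index-shifting on the two summands — in particular confirming that the convolution over $i$ with the $(i+1)M_i$ weights really does produce the derivative factor $(xm(x))'$ with precisely the power $x^4$ of shift — to be the main place where care is required; everything else is a routine substitution of Lemma~\ref{lemGFt}.
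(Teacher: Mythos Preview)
Your proposal is correct and follows exactly the approach the paper intends: the corollary is stated without a detailed proof, merely as an immediate consequence of Proposition~\ref{oneterm} and Lemma~\ref{lemGFt}, and your computation spells out precisely that derivation. The index bookkeeping you flag as delicate does work out cleanly, since $T_{m,r+s-1}$ is defined for $m\ge r+s-2$ and the convolution limits match.
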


\section{Dyck paths with $L=p$ for prime $p$}

When $L=p$, for some prime $p$, we must have that every term in the product $\prod_{i=1}^{n-1} {r_i + s_i \choose r_i}$ is equal to 1 except for one term which must equal $p$.  In particular, we must have that there is exactly one $1\leq k\leq n-1$ with $r_k\neq 0$ and $s_k\neq 0$. Furthermore, we must have that either $r_k=1$ and $s_k=p-1$ or $r_k=p-1$ and $s_k=1$.  Therefore, when $L =2$, we have \[ |\mathcal{D}_n^2| = |\mathcal{D}_n^{1,1}|. \]  When $L=p$ for an odd prime number, we have \[ |\mathcal{D}_n^p| = |\mathcal{D}_n^{1,p-1}| +  |\mathcal{D}_n^{p-1,1}| = 2|\mathcal{D}_n^{1,p-1}|. \]  Thus the results from the previous section can be used in the subsequent proofs.

\begin{thm} \label{TheoremL2} For $n\geq 4$,  the number of Dyck paths with semilength $n$ and $L=2$ is
\[ |\D_n^2| = (n-3)M_{n-4}, \]where $M_{n-4}$ is the $(n-4)$th Motzkin number. Additionally, $|\D_2^2| =1$ and $|\D_3^2| = 0.$  Thus the generating function for $|\D_n^2|$ is given by
\[ L_2(x) = x^2 + x^4\left(xm(x)\right)' \]
where $m(x)$ is the generating function for the Motzkin numbers.
\end{thm}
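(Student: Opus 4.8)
The plan is to build on the reduction $|\D_n^2| = |\D_n^{1,1}|$ recorded just before the statement: because $2$ is prime, $L(D)=2$ forces exactly one column $k$ of the $r$-$s$ array with both entries nonzero and $\binom{r_k+s_k}{r_k}=2$, hence $r_k=s_k=1$. So the first move is simply to specialize Proposition~\ref{oneterm} to $r=s=1$, which gives
\[ |\D_n^2| = T_{n-2,\,1} + \sum_{i=0}^{n-4}(i+1)M_i\,T_{n-4-i,\,1}. \]
The entire argument then reduces to understanding the Motzkin ballot numbers $T_{m,1}$.

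The crucial point is that $T_{m,1}$ is degenerate. A Motzkin path of positive length cannot have its first down step in position $1$, so $T_{m,1}=0$ for $m\geq 1$; meanwhile the convention $\T_{0,1}$ (the empty path) gives $T_{0,1}=1$. I would justify this either directly or by setting $k=1$ in Lemma~\ref{lemGFt}, whose generating function then collapses to the constant $1$. Feeding these values into the formula, the leading term $T_{n-2,1}$ vanishes for $n\geq 4$, and in the sum the only surviving index is $i=n-4$ (the one that makes $T_{n-4-i,1}=T_{0,1}=1$). What remains is the single term $(n-3)M_{n-4}$, which is the claimed formula.

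For the boundary values I would observe that the sum in Proposition~\ref{oneterm} is empty as soon as $n-4<0$, so for $n\in\{2,3\}$ only the two-return term $T_{n-2,1}$ contributes, yielding $T_{0,1}=1$ when $n=2$ and $T_{1,1}=0$ when $n=3$; these agree with $|\D_2^2|=1$ and $|\D_3^2|=0$. This also explains why $n=2,3$ must be excluded from the clean formula: there $(n-3)M_{n-4}$ would invoke the undefined $M_{-1}$ and $M_{-2}$. To finish with the generating function, I would write $L_2(x)=x^2+\sum_{n\geq 4}(n-3)M_{n-4}x^n$, reindex with $j=n-4$ to get $x^4\sum_{j\geq 0}(j+1)M_j x^j$, and recognize the sum as $(xm(x))'$, since $xm(x)=\sum_{j\geq 0}M_j x^{j+1}$ differentiates termwise to $\sum_{j\geq 0}(j+1)M_j x^j$. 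This gives $L_2(x)=x^2+x^4(xm(x))'$.

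The computation is short, so the only genuine subtlety---and the step I would treat most carefully---is the interpretation of $T_{m,1}$: that $T_{0,1}=1$ arises solely from the empty-path convention while every $T_{m,1}$ with $m\geq 1$ vanishes, together with the matching edge-case bookkeeping for $n=2,3$, where Proposition~\ref{oneterm} applies with an empty sum but the tidy formula $(n-3)M_{n-4}$ does not.
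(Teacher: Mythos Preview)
Your proposal is correct and follows essentially the same approach as the paper's proof: specialize Proposition~\ref{oneterm} to $r=s=1$, observe that $T_{m,1}=0$ for $m\geq 1$ while $T_{0,1}=1$, and simplify to $(n-3)M_{n-4}$ with the boundary cases handled by the empty sum. You additionally spell out the generating function derivation, which the paper's proof leaves implicit.
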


\begin{proof} By Proposition~\ref{oneterm}, for $n \geq 2$, \[ |\D_n^{1,1}| =T_{n-2,1} + \sum_{i=0}^{n-4}(i+1)M_i T_{n-4-i, 1}.\]
In the case where $n=3$ or $n=4$, the summation is empty and thus $|\D_2^2| = T_{0,1} = 1$ and $|\D_3^2| = T_{1,1} = 0$.  For $n \geq 4$, the term $T_{n-2,1} = 0$.  Furthermore, the terms in the summation are all 0 except when $i=n-4$.  Thus, \[ |\D_n^{1,1}| = (n-3)M_{n-4}T_{0,1} \]
or \[ |\D_n^2| = (n-3)M_{n-4}. \] \end{proof}

The sequence for the number of Dyck paths of semilength $n$ with $L=2$ is given by:
\[  |\D_n^2| = 1,0,1,2,6,16,45,126,357,\ldots \]
This can be found at OEIS A005717.

Because the formula for $|\D_n^2|$ is much simpler than the one found in Proposition~\ref{oneterm}, the correspondence between Dyck paths in $\D_n^2$ and Motzkin paths of length $n-4$ is actually fairly straightforward.  For each Motzkin word of length $n-4$, there are $n-3$ corresponding Dyck paths of semilength $n$ having $L=2$.  These corresponding Dyck paths are found by modifying the original Motzkin word $n-3$ different ways.  Each modification involves adding a $u$, $d$, and placeholder $*$ to the original Motzkin word. The $n-3$ distinct modifications correspond to the $n-3$ possible positions of the placeholder $*$ into the original Motzkin word.  

As an example, in the case where $n=6$, there are $M_2 = 2$ Motzkin words of length 2.  For each word, we can insert a placeholder $*$ in $n-3=3$ different positions, and thus there are a total of 6 corresponding Dyck paths of semilength 6 having $L=2$.  Figure~\ref{L2Figure} provides the detailed process when $n=6$.

\begin{figure}[t]
\begin{center}
\begin{tabular}{c|c|c|c|c}
$\begin{matrix} \text{Motzkin}\\ \text{word} \end{matrix}$ & $M^*$ & $\begin{matrix} \Asc(D)\\ \Des(D) \end{matrix}$ & $r$-$s$ array & Dyck path, $D$\\ \hline 
 &  \vspace*{-.1cm} $\bm{u}*hh\bm{d}$ & $\begin{matrix} (2&5&6)\\(1&2&6)\end{matrix}$ & $\begin{pmatrix}0&1&0&0&1\\3&1&0&0&0\end{pmatrix}$&
\begin{tikzpicture}[scale=.2]
\draw[help lines] (0,0) grid (12,4);
\draw[thick] (0,0)--(2,2)--(3,1)--(6,4)--(7,3)--(8,4)--(12,0);
\node at (0,5)  {\color{red!90!black}\ };
\end{tikzpicture}\\ 
 $hh$ &$\bm{u}h*h\bm{d}$ & $\begin{matrix} (3&5&6)\\(1&3&6) \end{matrix}$ & $\begin{pmatrix}0&0&1&0&2\\2&0&1&0&0\end{pmatrix}$&
\begin{tikzpicture}[scale=.2]
\draw[help lines] (0,0) grid (12,4);
\draw[thick] (0,0)--(3,3)--(4,2)--(6,4)--(8,2)--(9,3)--(12,0);
\node at (0,5)  {\color{red!90!black}\ };
\end{tikzpicture}\\  
 & $\bm{u}hh*\bm{d}$ & $\begin{matrix} (4&5&6)\\(1&4&6)\end{matrix}$ & $\begin{pmatrix}0&0&0&1&3\\1&0&0&1&0\end{pmatrix}$&
\begin{tikzpicture}[scale=.2]
\draw[help lines] (0,0) grid (12,4);
\draw[thick] (0,0)--(4,4)--(5,3)--(6,4)--(9,1)--(10,2)--(12,0);
\node at (0,4.5)  {\color{red!90!black}\ };
\end{tikzpicture}\\ \hline
  & $\bm{u}*u\bm{d}d$ & $\begin{matrix} (2&4&5&6)\\(1&2&3&6)\end{matrix}$ & $\begin{pmatrix}0&1&0&1&1\\2&1&1&0&0\end{pmatrix}$&
\begin{tikzpicture}[scale=.2]
\draw[help lines] (0,0) grid (12,3);
\draw[thick] (0,0)--(2,2)--(3,1)--(5,3)--(6,2)--(7,3)--(8,2)--(9,3)--(12,0);
\node at (0,3.5)  {\color{red!90!black}\ };
\end{tikzpicture}\\ 
$ud$ & $\bm{u}u*d\bm{d}$ & $\begin{matrix} (3&4&5&6)\\(1&2&3&6)\end{matrix}$ & $\begin{pmatrix}0&0&1&1&1\\1&1&1&0&0\end{pmatrix}$&
\begin{tikzpicture}[scale=.2]
\draw[help lines] (0,0) grid (12,3);
\draw[thick] (0,0)--(3,3)--(4,2)--(5,3)--(6,2)--(7,3)--(8,2)--(9,3)--(12,0);
\node at (0,3.5)  {\color{red!90!black}\ };
\end{tikzpicture}\\ 
 & $u\bm{u}d*\bm{d}$ & $\begin{matrix} (3&4&5&6)\\(1&2&4&6)\end{matrix}$ & $\begin{pmatrix}0&0&1&1&2\\1&1&0&1&0\end{pmatrix}$&
\begin{tikzpicture}[scale=.2]
\draw[help lines] (0,0) grid (12,3);
\draw[thick] (0,0)--(3,3)--(4,2)--(5,3)--(6,2)--(7,3)--(9,1)--(10,2)--(12,0);
\node at (0,3.5)  {\color{red!90!black}\ };
\end{tikzpicture}\\ \hline
\end{tabular}
\end{center}
\caption{The six Dyck paths of semilength 6 having $L=2$ and their corresponding Motzkin paths of length 2} \label{L2Figure}
\end{figure}

 When $L=p$ for an odd prime number, we can also enumerate $\D_n^p$ using Proposition~\ref{oneterm} as seen in the following theorem.
 
 \begin{thm}\label{TheoremLp} For a prime number $p \geq 3$ and $n \geq p$, the number of Dyck paths with semilength $n$ and $L=p$ is
\[ |\D_n^p| = 2\left(T_{n-2, p-1} + \sum_{i=0}^{n-2-p} (i+1)M_i T_{n-4-i, p-1}\right). \]
Thus, the generating function for $|\D_n^p|$ is
\[ 2x^p(1+xm(x))^{p-2}\left(x^2(xm(x))' + 1 \right). \]
\end{thm}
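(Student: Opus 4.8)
The plan is to obtain this theorem as an immediate specialization of Proposition~\ref{oneterm} and its generating-function corollary, so that the real combinatorial work is the reduction already carried out in the discussion preceding the statement. First I would justify, for $p$ an odd prime, that $\D_n^p$ is the disjoint union of $\D_n^{1,p-1}$ and $\D_n^{p-1,1}$. Since $L(D)=\prod_i \binom{r_i+s_i}{r_i}=p$ and each factor is a positive integer with $p$ prime, exactly one factor exceeds $1$ and it must equal $p$; equivalently, exactly one column $k$ of the $r$-$s$ array has both $r_k,s_k>0$, with $\binom{r_k+s_k}{r_k}=p$. The number-theoretic fact that $\binom{r+s}{r}=p$ with $r,s\geq 1$ forces $\{r,s\}=\{1,p-1\}$ then pins down the two possibilities $(r_k,s_k)\in\{(1,p-1),(p-1,1)\}$, exactly as recorded above.

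Next I would invoke the remark following Proposition~\ref{oneterm} that $|\D_n^{r,s}|$ depends only on the sum $r+s$, which gives $|\D_n^{1,p-1}|=|\D_n^{p-1,1}|$ and hence $|\D_n^p|=2\,|\D_n^{1,p-1}|$. From here the counting formula is pure bookkeeping: substituting $r=1$ and $s=p-1$ into Proposition~\ref{oneterm} yields $r+s-1=p-1$ and $n-2-s-r=n-2-p$, so that
\[ |\D_n^{1,p-1}| = T_{n-2,\,p-1} + \sum_{i=0}^{n-2-p}(i+1)M_i\,T_{n-4-i,\,p-1}, \]
and the hypothesis $n\geq r+s=p$ matches the theorem's range $n\geq p$. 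Multiplying by $2$ gives the claimed expression for $|\D_n^p|$. For the generating function I would specialize the generating-function corollary to Proposition~\ref{oneterm} at $r=1$, $s=p-1$, where $r+s=p$ and $r+s-2=p-2$; this gives the series $x^p(1+xm(x))^{p-2}\bigl(1+x^2(xm(x))'\bigr)$ for $|\D_n^{1,p-1}|$, and doubling it produces the stated $2x^p(1+xm(x))^{p-2}\bigl(x^2(xm(x))'+1\bigr)$.

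I do not anticipate a genuine obstacle, since every combinatorial step is established in Section~\ref{SecRS}; the only point requiring care is the characterization of prime binomial coefficients. If a self-contained argument is wanted there, assuming $r\leq s$ I would note $\binom{r+s}{r}\geq\binom{r+s}{1}=r+s$, with equality exactly when $r=1$. If $r\geq 2$ then $p=\binom{r+s}{r}>r+s$, so $p\nmid(r+s)$; but the identity $r\binom{r+s}{r}=(r+s)\binom{r+s-1}{r-1}$ forces $p\mid\binom{r+s-1}{r-1}$, which is impossible because $\binom{r+s-1}{r-1}=\tfrac{r}{r+s}\binom{r+s}{r}<\binom{r+s}{r}=p$. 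Hence $r=1$ and $s=p-1$, completing the reduction and thus the proof.
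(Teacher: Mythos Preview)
Your proposal is correct and follows the same route as the paper: both reduce the problem to Proposition~\ref{oneterm} by observing that $L(D)=p$ forces a single nontrivial column in the $r$-$s$ array with $\{r_k,s_k\}=\{1,p-1\}$, then double the count from $\D_n^{1,p-1}$. You additionally supply a clean self-contained proof that $\binom{r+s}{r}=p$ with $r,s\ge 1$ forces $\{r,s\}=\{1,p-1\}$, which the paper only asserts in the preamble to the theorem; this is a welcome bit of completeness but not a different strategy.
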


\begin{proof} This lemma is a direct corollary of Proposition~\ref{oneterm} with $r=1$ and $s=p-1$.  We multiply by two to account for the case where $r=p-1$ and $s=1$, and $r=1$ and $s=p-1$. \end{proof}

\section{Dyck paths with $L=4$}\label{SecL4}

When $L(D)=4$, things are more complicated than in the cases for prime numbers. If $D \in \D_n^4$, then one of the following is true:
\begin{itemize}
\item $D \in \D_n^{1,3}$ or $D \in \D_n^{3,1}$; or
\item All but two terms in the product $\prod_{i=1}^{n-1} {r_i + s_i \choose r_i}$ are equal to 1, and those terms must both equal  $2$.
\end{itemize}

Because the first case is enumerated in Section~\ref{SecRS}, this section will be devoted to counting the Dyck paths $D \in \D_n^4$ where $M^*_D$ has exactly two $*$'s in positions $k_1$ and $k_2$ and \[ L(D) = {r_{k_1} + s_{k_1} \choose r_{k_1}} {r_{k_2} + s_{k_2} \choose r_{k_2}} \] with $r_{k_1} = s_{k_1} = r_{k_2} = s_{k_2} = 1$.

For ease of notation, let $\widehat{\D}_n$ be the set of Dyck paths $D \in \D_n^4$ with the property that $M^*_D$ has exactly two $*$'s. Also, given $D \in \widehat{\D}_n$, define $x_i(D)$ to be the number of ups before the $i$th $*$ in $M^*_D$ and let $y_i(D)$ be the number of downs before the $i$th $*$ for $i \in \{1, 2\}$.

\begin{ex} Let $D $ be the Dyck path with ascent sequence and descent sequence
\[ \Asc(D) = (3, 6, 7, 8, 10, 11) \quad \text{and} \quad \Des(D) = (1, 3, 4, 5, 8, 11) \]
and thus $r$-$s$ array
\[
\left( \begin{array}{cccccccccc}
0 & 0 & 1 & 0 & 0 & 2 & 1 & 1 & 0 & 3 \\
3 & 0 & 1 & 1 & 2 & 0 & 0 & 1 & 0 & 0 \end{array} \right).\]
By inspection of the $r$-$s$ array and noticing that only columns 3 and 8 have two nonzero entries, we see that $L(D) = {1 + 1 \choose 1}{1 + 1 \choose 1} = 4$ and thus $D \in \widehat{\D}_{11}$. Furthermore, we can compute
\[ M^*_D = uh*uudd*hd.\]
Since there is one $u$ before the first $*$ and no $d$'s, we have $x_1(D) = 1$ and $y_1(D) = 0$. Similarly, there are three $u$'s before the second $*$ and two $d$'s so $x_2(D) = 3$ and $y_2(D) = 2$.
\end{ex}

In this section, we will construct $M^*$ from smaller Motzkin paths. To this end, let us notice what $M^*_{D}$ should look like if $D\in \widehat{\D}_n$. 

\begin{lem}\label{DM}
Suppose $M^*\in \M_{n-1}^*$ has exactly two $*$'s. Then $D_{M^*} \in \widehat{\D}_n$ if, writing $x_i=x_i(D_{M^*})$ and $y_i=y_i(D_{M^*})$, we have:
\begin{itemize}
\item The $(x_1+1)$st occurrence of either a $d$ or $*$ is followed by another $d$ or $*$;
\item The $(x_2+2)$nd occurrence of either a $d$ or $*$ is followed by another $d$ or $*$, or $x_2$ is equal to the number of $d$'s and $M^*$ ends in $d$ or $*$;
\item The $(y_1)$th occurrence of either a $u$ or $*$ is followed by another $u$ or $*$, or $y_1=0$ and the $M^*$ begins with $u$ or $*$;
\item The $(y_2+1)$st occurrence of either a $u$ or $*$ is followed by another $u$ or $*$.
\end{itemize}
\end{lem}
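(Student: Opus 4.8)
The plan is to reduce the claim to the single identity $L(D_{M^*})=4$ and then to read the four hypotheses off the relations \eqref{rs} and \eqref{rs2}. Write $D=D_{M^*}$ and let the two $*$'s occupy positions $k_1<k_2$. Because the correspondence in Definition~\ref{theta} is a bijection we have $M^*_D=M^*$, so $k_1$ and $k_2$ are the only indices $k$ with $r_k(D)>0$ and $s_k(D)>0$; every other factor in $\prod_i\binom{r_i+s_i}{r_i}$ equals $1$. Hence
\[ L(D)=\binom{r_{k_1}+s_{k_1}}{r_{k_1}}\binom{r_{k_2}+s_{k_2}}{r_{k_2}}, \]
and since $r_{k_j},s_{k_j}\geq1$ each factor is at least $2$. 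Therefore $L(D)=4$ if and only if $r_{k_1}=s_{k_1}=r_{k_2}=s_{k_2}=1$, and it suffices to show that the four bullets force exactly these four equalities.

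Next I would locate $k_1$ and $k_2$ within $\Asc(D)$ and $\Des(D)$ by a counting argument: the ascent positions of $D$ are exactly the positions carrying $d$ or $*$, and the descent positions are those carrying $u$ or $*$. Before the first $*$ there are $y_1$ letters $d$ and $x_1$ letters $u$, so $k_1=a_{y_1+1}=b_{x_1+1}$; before the second $*$ one must in addition count the first $*$ itself, which is simultaneously a $d$-or-$*$ and a $u$-or-$*$, producing the shifts $k_2=a_{y_2+2}=b_{x_2+2}$. Substituting these indices into \eqref{rs} and \eqref{rs2} gives
\begin{align*}
r_{k_1}&=b_{y_1+1}-b_{y_1}, & s_{k_1}&=a_{x_1+2}-a_{x_1+1},\\
r_{k_2}&=b_{y_2+2}-b_{y_2+1}, & s_{k_2}&=a_{x_2+3}-a_{x_2+2}.
\end{align*}

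I would then interpret each of these four differences, exactly as in Observation~\ref{obsRS}, as the gap between two consecutive entries of $M^*$ that are equal to $u$ or $*$ (for the $r$'s) or equal to $d$ or $*$ (for the $s$'s). Such a gap is $1$ precisely when the earlier of the two entries is immediately followed by another entry of the same class, which is exactly what the hypotheses assert: the bullet on the $(x_1+1)$st $d$-or-$*$ gives $s_{k_1}=1$, the bullet on the $y_1$th $u$-or-$*$ gives $r_{k_1}=1$, the bullet on the $(y_2+1)$st $u$-or-$*$ gives $r_{k_2}=1$, and the bullet on the $(x_2+2)$nd $d$-or-$*$ gives $s_{k_2}=1$. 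Combining the four equalities yields $L(D)=\binom{2}{1}^2=4$, so $D\in\widehat{\D}_n$.

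The only delicate point is the two boundary conventions, and this is where the real care is required rather than any new idea. For $r_{k_1}$ with $y_1=0$ one invokes $b_0=0$, so $r_{k_1}=b_1$ and the clause ``$M^*$ begins with $u$ or $*$'' is precisely $b_1=1$. For $s_{k_2}$, the difference $a_{x_2+3}-a_{x_2+2}$ hits the sentinel value $a_K=n$ exactly when $x_2$ equals the total number of $d$'s in $M^*$, so that $a_{x_2+2}$ is the last non-terminal ascent position; there the alternative clause ``$M^*$ ends in $d$ or $*$'' is exactly $a_{x_2+2}=n-1$, again forcing $s_{k_2}=1$. Thus the main obstacle is the off-by-one bookkeeping—in particular justifying the two $+2$ shifts in the indices of $k_2$ and handling these two edge cases cleanly; running the same computation backward also yields the converse, so the stated condition is in fact an equivalence.
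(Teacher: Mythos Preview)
Your proposal is correct and follows essentially the same approach as the paper: reduce to showing $r_{k_1}=s_{k_1}=r_{k_2}=s_{k_2}=1$, identify the indices of $k_1,k_2$ in $\Asc(D)$ and $\Des(D)$ via the counts $x_i,y_i$, and translate each equality into the corresponding consecutive-occurrence condition in $M^*$. If anything, you are more explicit than the paper, which treats only the $r_{k_1}$ case in detail and dispatches the remaining three (including the boundary cases you handle) with ``the other three bullet points follow similarly.''
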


\begin{proof}
Suppose $M^*\in \M_{n-1}^*$ has two stars in positions $k_1$ and $k_2$. Then it is clear that \[L(D) ={r_{k_1} + s_{k_1} \choose r_{k_1}} {r_{k_2} + s_{k_2} \choose r_{k_2}} \] so it suffices to show that $r_{k_1} = s_{k_1} = r_{k_2} = s_{k_2} = 1$. Recall that $\Asc(D) = (a_1, a_2, \ldots, a_k)$ is the increasing sequence of positions $i$ in $M^*$ with $m_i=d$ or $*$. Similarly, $\Des(D)=(b_1, b_2, \ldots, b_k)$ is the increasing sequence of positions $i$ in $M^*$ with $m_i=u$ or $*$. 

First notice that $r_{k_1} = 1$ only if $b_i-b_{i-1}=1$ where $a_i=k_1$. However, $i=y_1+1$ since the first star must be the $(y_1+1)$st occurrence of $d$ or $*$. Therefore $b_i$ is the position of the $(y_1+1)$st $u$ or $*$ and $b_{i-1}$ is the position of the $(y_i)$th $u$ or $*$. The difference in positions is 1 exactly when they are consecutive in $M^*$. 
The other three bullet points follow similarly.
\end{proof}

Enumerating Dyck paths $D \in \widehat{\D}_n$ will be found based on the values of $x_1(D)$ and $y_2(D)$. The cases we consider are
\begin{itemize}
\item $x_1(D) \notin \{y_2(D), y_2(D) + 1\}$;
\item $x_1(D) = 1$ and $y_2(D) = 0$;
\item $x_1(D) = y_2(D) + 1 \geq 2$; and
\item $x_1(D) = y_2(D)$.
\end{itemize}

The next four lemmas address each of these cases separately. Each lemma is followed by an example showing the correspondence the proof provides.

\begin{lem} \label{L4Type1} For $n \geq 7$, the number of Dyck paths $D \in \widehat{\D}_n$ with  $x_1(D) \notin \{y_2(D), y_2(D) + 1\}$ is ${n-5 \choose 2}M_{n-7}.$
\end{lem}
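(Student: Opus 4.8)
The plan is to prove this by an explicit bijection carried out on the modified Motzkin encodings $M^*_D$, in the same style as the proofs of Lemmas~\ref{RSHit2} and \ref{RSHit1}. Since $D\in\widehat{\D}_n$ has exactly two $*$'s and $L(D)=4$, both stars satisfy $r=s=1$, so by Lemma~\ref{DM} each star carries a distinguished up-step (an \emph{opener}, forcing $r=1$) and a distinguished down-step (a \emph{closer}, forcing $s=1$); for the extreme stars these roles may be played by the initial $u$ or the terminal $d$ of $M^*_D$ through the boundary clauses of Lemma~\ref{DM}. I would first show that the hypothesis $x_1(D)\notin\{y_2(D),y_2(D)+1\}$ is exactly the condition governing the relative order of these four distinguished letters that separates the present case from the three degenerate configurations treated in the remaining lemmas.

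The bijection itself deletes, from $M^*_D$, the two openers and the two closers (two up-steps and two down-steps in all). This yields a word $W'$ of length $(n-1)-4=n-5$ still containing exactly the two $*$'s, and the main structural claim is that removing the two stars from $W'$ produces a genuine Motzkin path $N$ of length $n-7$. Thus $W'$ is just $N$ with two stars inserted, so $D$ is encoded by the pair $(N,S)$, where $N\in\mathcal{M}_{n-7}$ and $S\subseteq\{1,\dots,n-5\}$ is the two-element set recording the positions of the stars in $W'$. For the inverse I would, given such a pair, form $W'$ and then reinsert one up-step and one down-step per star in the unique positions forced by the requirement $r=s=1$ (Lemma~\ref{DM}), check using Observation~\ref{obsRS} that the resulting $M^*$ indeed has both binomial coefficients equal to $\binom{2}{1}=2$, and check that the reconstructed path satisfies $x_1\notin\{y_2,y_2+1\}$. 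Counting the pairs then gives $\binom{n-5}{2}$ choices for $S$ and $M_{n-7}$ choices for $N$, for a total of $\binom{n-5}{2}M_{n-7}$.

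The crux lies in the two well-definedness verifications. Forward, I must prove that deleting precisely the two openers and two closers from the de-starred Motzkin word $M^*_D$ leaves a path that stays weakly above the axis and returns to it --- i.e.\ that $N$ is Motzkin of the correct length --- which is exactly where the relative order controlled by $x_1\notin\{y_2,y_2+1\}$ is needed. Backward, I must show that \emph{every} pair $(N,S)$ reconstructs to a legal generic path, so that the count genuinely factors as a product: the forced reinsertion must succeed for all $\binom{n-5}{2}$ placements of the two stars and independently of $N$, and it must land in the generic case rather than in one of the others. Verifying this independence, and confirming that the excluded equalities $x_1=y_2$ and $x_1=y_2+1$ together with the boundary case $x_1=1,y_2=0$ are precisely the non-generic skeletons excised from this enumeration, is where the real work sits; the bound $n\geq 7$ merely guarantees that $W'$ has the two star-slots and the four distinguished letters it needs.
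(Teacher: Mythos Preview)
Your plan is correct and is essentially the paper's own argument, just narrated from the inverse direction: the paper starts from a Motzkin word $M\in\mathcal{M}_{n-7}$, chooses two star positions $1\le j_1<j_2\le n-5$ to form $\overline{M}^*$, and then inserts two $u$'s and two $d$'s at positions dictated by the counts $\overline{x}_i,\overline{y}_i$ --- exactly your ``openers'' and ``closers'' --- while you begin with $M^*_D$ and delete those four letters to recover $(N,S)$. One point worth making explicit when you write this up: the paper splits the verification into the two subcases $x_1(D)<y_2(D)$ and $x_1(D)\ge y_2(D)+2$, because which inserted $u$/$d$ lands between the two stars (and hence the exact relationship between $x_i,y_i$ and $\overline{x}_i,\overline{y}_i$) differs in the two regimes; your proposal alludes to this with ``the relative order controlled by $x_1\notin\{y_2,y_2+1\}$'' but you will need to treat the two regimes separately to nail down the inverse and confirm that every pair $(N,S)$ reconstructs to a path in the correct subcase.
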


\begin{proof} 

We will show that for any $M \in \M_{n-7}$, there are ${n-5 \choose 2}$ corresponding Dyck paths $D \in \widehat{\D}_n$ with  $x_1(D) \notin \{y_2(D), y_2(D) + 1\}$. To this end, let $M \in \M_{n-7}$ and let $1 \leq j_1< j_2 \leq n-5$.  There are ${n-5 \choose 2}$ choices for $j_1$ and $j_2$ each corresponding to a Dyck path with the desired properties. We create a modified Motzkin word $\ol{M}^* \in \M_{n-5}^*$ with $*$'s in position $j_1$ and $j_2$ and the subword of $\ol{M}^*$ with the $*$'s removed is equal to $M$.
Let $\overline{x}_i$ be the number of ups before the $i$th $*$ in $\ol{M}^*$ and let $\overline{y}_i$ be the number of downs before the $i$th $*$ in $\ol{M}^*$ for $i \in \{1, 2\}$. We create the modified Motzkin word $M^* \in M^*_{n-1}$ from $\ol{M}^*$ as follows:
\begin{enumerate}
\item Insert $d$ before the $(\overline{x}_2+1)$th down or at the very end of $\ol{x}_2$ is the number of downs in $\ol{M}^*$. 
\item Insert $d$ before the $(\overline{x}_1+1)$th down or at the very end of $\ol{x}_1$ is the number of downs in $\ol{M}^*$.
\item Insert $u$ after the $\overline{y}_2$th up or at the beginning if $\overline{y}_2 = 0$.
\item Insert $u$ after the $\overline{y}_1$th up or at the beginning if $\overline{y}_1 = 0$. 
\end{enumerate}
Notice that in Step (1), the $d$ is inserted after the second $*$ and in Step~(4), the $u$ is inserted before the first $*$.  Let $D=D_{M^*}$. We first show that $D \in  \widehat{\D}_n$ by showing $L(D)=4$.  We proceed by examining two cases.

In the first case, assume $\ol{x}_1 +1 \leq \ol{y}_2.$ In this case, the inserted $d$ in Step~$(2)$ must occur before the second $*$ since there were $\ol{y}_2$ $d$'s before the second $*$ in $M^*$.  Similarly, the inserted $u$ in Step~$(3)$ must occur after the first $*$.   Thus, we have
\[ x_1(D) = \ol{x}_1 + 1, \quad y_1(D) = \ol{y_1}, \quad x_2(D) = \ol{x}_2 + 2, \quad \text{and} \quad  y_2(D) = \ol{y}_2 + 1.\] We now use the criteria of Lemma~\ref{DM}, to see that $L(D) = 4$:
\begin{itemize}
\item The $(x_1 + 1)$th occurrence of a $d$ or $*$ is the inserted $d$ from Step (2) and is thus followed by $d$;
\item The $(x_2 + 2)$th occurrence of a $d$ or $*$ is the inserted $d$ from Step (1) and is thus followed by $d$;
\item The $y_1$th occurrence of a $u$ is the inserted $u$ from Step (4) and is thus followed by $u$; and
\item The $(y_2 + 1)$th occurrence of a $u$ is the inserted $u$ from Step (3) and is thus followed by $u$.
\end{itemize}

We also have
\[ x_1(D) = \ol{x}_1 + 1 \leq \ol{y}_2 < \ol{y_2} + 1 = y_2(D), \]
and thus $D \in \widehat{\D}_n$ with  $x_1(D) \notin \{y_2(D), y_2(D) + 1\}$ as desired.

In the second case where $\ol{x}_1  \geq  \ol{y}_2$, the inserted $d$ in Step (2) occurs after the second $*$ and the inserted $u$ in Step (3) occurs before the first $*$. Here we have
\[ x_1(D) = \ol{x}_1 + 2, \quad y_1(D) = \ol{y_1}, \quad x_2(D) = \ol{x}_2 + 2, \quad \text{and} \quad  y_2(D) = \ol{y}_2.\]
We can easily check that the criteria of Lemma~\ref{DM} are satisfied to show that $L(D) = 4$. Also,
\[ x_1(D) = \ol{x}_1 + 2 \geq \ol{y}_2 + 2 = y_2(D) + 2,\] and thus $D$ has the desired properties.

\sloppypar{To see that this process is invertible, consider any $D \in \widehat{\D}_n$ with  $x_1(D) \notin \{y_2(D), y_2(D) + 1\}$ and let $k_1 \leq k_2$ be the positions of the $*$'s in $M^*_D$. We consider the two cases where $x_1(D) < y_2(D)$ and where $x_1(D) \geq y_2(D)+2$. Since for each case, we've established the relationship between $x_i$ and $\overline{x}_i$ and between $y_i$ and $\overline{y}_i$, it is straightforward to undo the process.  }

Begin with the case where $x_1(D) < y_2(D)$. In this case:
\begin{itemize}
\item Delete the $(x_2(D))$th $d$ and the $(x_1(D))$th $d$.
\item Delete the $(y_2(D) + 1)$th $u$ and the $(y_1(D)+1)$th $u$.
\item Delete both $*$'s.
\end{itemize}
Now consider the case where $x_1(D) \geq y_2(D)+2$. In this case:
\begin{itemize}
\item Delete the $(x_2(D)+2)$th $d$ and the $(x_1(D)+1)$th $d$.
\item Delete the $(y_2(D) + 1)$th $u$. and the $(y_1(D)+2)$th $u$.
\item Delete both $*$'s.
\end{itemize}
 \end{proof}

 \begin{ex} Suppose $n=11$ and let $M = hudh \in \M_{4}$. There are ${6 \choose 2} = 15$ corresponding Dyck paths $D \in \widehat{\D}_n$ with  $x_1(D) \notin \{y_2(D), y_2(D) + 1\}$, and we provide two of these in this example.
 
 First, suppose $j_1 = 2$ and $j_2=5$ so that $\ol{M}^* = h*ud*h$. We then count the number of ups and downs before each $*$ to get
 \[ \ol{x}_1 = 0, \quad \ol{y}_1 = 0, \quad \ol{x}_2 = 1, \quad \text{and} \quad \ol{y}_2 = 1.\]
 Following the steps in the proof, we insert two $u$'s and two $d$'s to get
 \[ M^* = \bm{u}h*u\bm{ud}d*h\bm{d}.\] Let $D=D_{M^*}$ and notice that the number of ups before the first $*$ is $x_1(D) = 1$ and the number of downs before the second $*$ is $y_2(D) = 2$ and thus $x_1(D) < y_2(D)$. Since $L(D) = 4$, $D$ satisfies the desired criteria. To see that the process is invertible, we would delete the third and first $d$, the third and first $u$, and the two $*$'s.
 
 Now, suppose $j_1=2$ and $j_2=4$ so that $\ol{M}^* = h*u*dh$. We again count the number of ups and downs before each $*$ to get
 \[ \ol{x}_1 = 0, \quad \ol{y}_1 = 0, \quad \ol{x}_2 = 1, \quad \text{and} \quad \ol{y}_2 = 0,\] and insert two $u$'s and two $d$'s to get
 \[ M^* = \bm{uu}h*u*\bm{d}dh\bm{d}. \]
 Now, if $D=D_{M^*}$ we have $x_1(D)= 2$ and $y_2(D) = 0$ and so $x_1(D) \geq y_2(D) + 2$ as desired.  We can also easily check the $L(D) = 4$.
 \end{ex}
 
\begin{lem}\label{L4Type2} For $n \geq 5$, the number of Dyck paths $D \in \widehat{\D}_n$ with  $x_1(D)= 1$ and $y_2(D)= 0$ is $M_{n-5}.$
\end{lem}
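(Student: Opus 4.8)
The plan is to exhibit a bijection between $\M_{n-5}$ and the Dyck paths counted by the lemma, realized on the level of the words $M^*_D$ as in the proofs of Lemmas~\ref{RSHit2} and \ref{L4Type1}. The first step is to read off the shape forced by the two hypotheses. Since $x_1(D)=1$ there is exactly one $u$ before the first $*$, and since $y_2(D)=0$ there is no $d$ before the second $*$ (so $y_1(D)=0$ as well). Substituting these into the four conditions of Lemma~\ref{DM}: the third and fourth conditions force $M^*_D$ to begin with $u*$; the first forces the second $*$ to be immediately followed by a $d$; and having no $d$'s before the second $*$ forces the letters strictly between the two $*$'s to lie in $\{u,h\}$. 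Hence $M^*_D=u*C*dT$ with $C\in\{u,h\}^*$, its $*$-deletion is a Motzkin word $W$ of length $n-3$, and---writing $H=x_2(D)$ for the number of $u$'s before the second $*$---the sole remaining requirement, the second condition of Lemma~\ref{DM} (equivalently $s_{k_2}=1$), says that the $H$th and $(H+1)$st down steps of $W$ are adjacent, or that $H$ equals the total number of down steps and $W$ ends in $d$.

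I would then build the bijection directly. Given $M\in\M_{n-5}$, prepend a $u$ to get $uM$, a path from height $0$ to height $1$ whose every vertex after the first step has height at least $1$; in particular it never returns to the axis. Let $H$ be the number of $u$'s before its first down step, and insert a single $d$ immediately after the $H$th down step of $uM$ (or at the very end when $uM$ has only $H-1$ down steps). Because the insertion point sits at height at least $1$, the result $W$ is a genuine Motzkin word of length $n-3$, and by design its inserted $d$ directly follows, and is adjacent to, the $H$th down step, which is exactly the adjacency/boundary condition isolated above. Finally place a $*$ just after the leading $u$ and a $*$ just before the first down step of $W$ to form $M^*$; then $x_1=1$, $y_2=0$, and Lemma~\ref{DM} guarantees $D_{M^*}\in\widehat{\D}_n$ (see Definition~\ref{theta}). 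The inverse sends a valid $M^*_D$ to the Motzkin word obtained by deleting the leading $u$, both $*$'s, and the down step immediately following the $H$th down step (the final down step, in the boundary case).

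The step I expect to be the crux is proving that this inverse is well defined, i.e.\ that deleting the leading $u$ and that distinguished $d$ from a valid $W$ again produces a path that stays weakly above the axis. This is where the condition $s_{k_2}=1$ does the real work: one must show it forces $W$ not to return to height $0$ before its $(H+1)$st down step. The clean way to see this is a first-return argument---if $W$ first returned to $0$ at its $\rho$th down step, then $\rho\ge H$ always, while $\rho=H$ would put the $H$th down step at height $0$ and hence separate it from the next down step by an up step, contradicting the adjacency condition. Thus $\rho>H$ (or we are in the boundary case, where $W$ is a single arch), so all heights strictly before the removed down step are at least $1$ and the two removals cancel correctly. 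Once this and the matching routine verification of the four conditions of Lemma~\ref{DM} for the forward map are in place, the two maps are visibly mutually inverse on the normal form $u*C*dT$, giving $|\,\{D\in\widehat{\D}_n: x_1(D)=1,\ y_2(D)=0\}\,|=|\M_{n-5}|=M_{n-5}$. The boundary case must be threaded through both directions by hand, exactly as the ``or at the end/beginning'' clauses are handled in Lemmas~\ref{RSHit2} and \ref{RSHit1}.
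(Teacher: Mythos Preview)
Your proposal is correct and follows essentially the same approach as the paper: your forward map $M\mapsto uM\mapsto W\mapsto M^*$ is literally the paper's construction (inserting a $d$ ``immediately after the $H$th down'' of $uM$ with $H=\bar x_2+1$ is the same string as the paper's ``insert $d$ before the $(\bar x_2+1)$st down of $M$,'' then prepend $u$), and your inverse agrees with the paper's since the $H$th and $(H{+}1)$st downs are adjacent. The first-return argument you single out as the crux is a nice addition---the paper states the inverse but does not justify that the result stays weakly above the axis.
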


\begin{proof} We find a bijection between the set of Dyck paths $D \in \widehat{\D}_n$ with  $x_1(D)= 1$ and $y_2(D)= 0$ with the set $\M_{n-5}$. First, suppose $M \in \M_{n-5}$. Let $\ol{x}_2$ be the number of ups before the first down.  Now create the modified Motzkin word $M^* \in \M^*_{n-1}$ as follows.
\begin{enumerate}
\item Insert $d$ before the $(\ol{x}_2 + 1)$st $d$ in $M$ or at the end if the number of downs in $M$ is $\ol{x}_2$.
\item Insert $*$ before the first $d$.
\item Insert $u$ followed by $*$ before the first entry.
\end{enumerate}
Let $D = D_{M^*}$. By construction, we have
\[ x_1(D) = 1, \quad y_1(D) = 0, \quad x_2(D) = \ol{x} + 1, \quad \text{and} \quad y_2(D) =0.\]
In particular, Step (3) gives us $x_1(D)$ and $y_1(D)$, while Step (2) gives us $x_2(D)$, and $y_2(D)$. We also have the four criteria of Lemma~\ref{DM}:
\begin{itemize}
\item The second occurrence of a $d$ or $*$ is the second $*$ which is followed by $d$;
\item The $(x_2+2)$st occurrence of a $d$ or $*$ is the inserted $d$ from Step (1) which is followed by $d$ (or at the end);
\item $M^*$ begins with $u$; and
\item The first occurrence of a $u$ or $*$ is the first entry $u$ which is followed by $*$.
\end{itemize}

Let us now invert the process. Starting with a Dyck path $D$ with $x_1(D)= 1$ and $y_2(D)= 0$, and its corresponding modified Motzkin word $M^*_D$. Since $y_2(D)=0$, we also have $y_1(D)=0$, and thus by Lemma~\ref{DM}, we must have that the first two entries are either $uu$, $u*$, $**$, or $*u$. However, since we know $x_1(D)=1$, it must be the case that $M^*_D$ starts with $u*$. As usual, let $x_2(D)$ be the number of $u$'s before the second star. Obtain $M\in M_{n-5}$ by starting with $M^*_{D}$ and then:
\begin{itemize}
\item Delete the $(x_2)$th $d$.
\item Delete the first $u$.
\item Delete both $*$'s.
\end{itemize}
\end{proof}

\begin{ex} Suppose $n=11$ and let $M=huudhd \in \M_6$. By Lemma~\ref{L4Type2}, there is one corresponding Dyck path $D \in \widehat{\D}_{11}$ with  $x_1(D)= 1$ and $y_2(D)= 0$. Following the notation in the proof, we have $\ol{x}_2 = 2$ and we get
\[ M^* = \bm{u*}huu\bm{*d}hd\bm{d}.\]
Let $D=D_{M^*}$. We can easily check that $L(D)$ = 1. Also, $x_1(D) = 1$ and $y_2(D) = 0$ as desired.
\end{ex}

\begin{lem} \label{L4Type3} For $n \geq 7$, the number of Dyck paths $D \in \widehat{\D}_n$ with  $x_1(D)=  y_2(D)+1 \geq 2$ is $$\sum_{i=0}^{n-7} (i+1)M_iM_{n-7-i}.$$
\end{lem}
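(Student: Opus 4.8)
The plan is to prove the formula by exhibiting a bijection between the set of Dyck paths $D \in \widehat{\D}_n$ with $x_1(D) = y_2(D)+1 \geq 2$ and the set of triples $(M, j, N)$, where $M \in \M_i$ and $N \in \M_{n-7-i}$ for some $0 \leq i \leq n-7$, and $1 \leq j \leq i+1$ indexes a gap of $M$. Since there are exactly $(i+1)M_i M_{n-7-i}$ such triples for each $i$, summing over $i$ produces the stated count. The guiding principle, as in the proofs of Lemma~\ref{RSHit1} and Lemma~\ref{L4Type1}, is that each of the two $*$'s contributes a factor of $2$ precisely when it is tightly framed by a neighboring up-step and down-step (so that $r_k = s_k = 1$), while the remaining freedom in $M^*_D$ is carried by two independent Motzkin paths together with a single position marker. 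The marked Motzkin path $M$ supplies both the factor $M_i$ and the factor $(i+1)$, exactly as the free Motzkin path and its insertion position do in the proof of Lemma~\ref{RSHit1}; the unmarked path $N$ supplies $M_{n-7-i}$.

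First I would, starting from a triple $(M,j,N)$, insert a $*$ into $M$ at the gap indexed by $j$ to obtain a word $\ol{M}^* \in \M^*_i$ containing a single $*$. I would then build $M^* \in \M^*_{n-1}$ by splicing in $N$ and inserting two $u$'s, two $d$'s, and the second $*$ in the positions dictated by the four bullet points of Lemma~\ref{DM}, arranging the two inserted $u$'s to fall before the first $*$ so that the number of $u$'s before the first $*$ exceeds the number of $d$'s before the second $*$ by exactly one. A length count gives $(i+1) + (n-6-i) + 4 = n-1$, as required. Setting $D = D_{M^*}$, I would verify directly from the four adjacency conditions of Lemma~\ref{DM} that $r_{k_1}=s_{k_1}=r_{k_2}=s_{k_2}=1$, hence $L(D)=4$ and $D \in \widehat{\D}_n$, and then read off from the construction that $x_1(D) = y_2(D)+1 \geq 2$.

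To invert the map I would locate the two $*$'s in $M^*_D$ together with their framing up- and down-steps; these are pinned down by $x_1(D), y_1(D), x_2(D), y_2(D)$ and the criteria of Lemma~\ref{DM}, just as the inversions in Lemma~\ref{L4Type1} and Lemma~\ref{L4Type2} delete specified occurrences of $u$, $d$, and $*$. Deleting these six symbols leaves two residual Motzkin paths and a distinguished gap, recovering $M$, $N$, and $j$. The hard part will be the bookkeeping forced by the boundary relation $x_1 = y_2+1$: this equality places a single up-step in a knife's-edge position relative to the down-count of the second $*$, so one must track carefully on which side of each star every inserted step lands, confirm that no additional return is created (the path must retain a single return at the end, so that both stars lie strictly inside one arch), and check that the residual words are genuinely Motzkin paths. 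This is the same flavor of case analysis that splits the proof of Lemma~\ref{L4Type1} into its two subcases, here collapsed onto the single degree of freedom recorded by $j$.
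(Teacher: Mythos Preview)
Your proposal is correct and follows the same bijective strategy as the paper: match each Dyck path in this class with a triple $(M, j, N)$ where $M \in \M_i$, $N \in \M_{n-7-i}$, and $1 \leq j \leq i+1$. The paper's execution differs from yours in one organizational respect worth adopting: rather than first inserting a lone $*$ into $M$ at gap $j$ and then separately splicing in $N$, two $u$'s, two $d$'s, and the second $*$, the paper inserts the entire block $u\,{*}\,N\,d$ at position $j$ of $M$ to form an intermediate word $\ol{M}^* \in \M^*_{n-4}$, and only afterwards performs three single-letter insertions (the second $*$, one more $u$, one more $d$) at positions read off from the counts $\ol{x}_1, \ol{y}_1, \ol{x}_2$ in $\ol{M}^*$. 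Packaging $N$ immediately after the first $*$ is exactly what pins down its location for the inverse (it is recovered as the maximal Motzkin subword following the remaining $*$ in $\ol{M}^*$) and what makes the four adjacency checks of Lemma~\ref{DM} automatic. With that placement the construction and its inverse are completely uniform, so the case analysis and the ``knife's-edge'' bookkeeping you anticipate never arise. Your side observation that such a $D$ has only the final return is true, but the paper's argument neither states nor needs it.
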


\begin{proof} Consider a pair of Motzkin paths, $M$ and $P$, where $M \in \M_i$ and $P \in \M_{n-7-i}$ with $0 \leq i \le n-7$. For each such pair, we consider $1 \leq j \leq i+1$ and find a corresponding Dyck path $D \in \widehat{\D}_n$ with  $x_1(D)=  y_2(D)+1 \geq 2$. Thus, there will be $i+1$ corresponding Dyck paths for each pair $M$ and $P$.

We begin by creating a modified Motzkin word $\ol{M}^* \in \M^*_{n-4}$ by inserting $u$, followed by $*$, followed by the path $P$, followed by $d$ before the $i$th entry in $M$. Then, let $\overline{x}_1$ be the number of ups before $*$ in $\ol{M}^*$ and $\overline{y}_1$  be the number of downs before the $*$ in $\ol{M}^*$. Notice that $\ol{x}_1 \geq 1$. We now create a modified Motzkin word $M^* \in \M^*$ as follows.
\begin{enumerate}
\item Insert $*$ before the $(\ol{x}_1 + 1)$st $d$ or at the end if $\ol{x} + 1$ equals the number of downs in $\ol{M}^*$. Now let  $\overline{x}_2$ be the number of ups before this second $*$. 
\item Insert $u$ after the $\ol{y}_1$th $u$ in $\ol{M}^*$ or at the beginning if $\ol{y}_1 = 0$.
\item Insert $d$ before the $(\ol{x}_2 + 1)$st $d$ (or at the end).
\end{enumerate}
Let $D = D_{M^*}$. By construction, we have
\[ x_1(D) = \ol{x}_1 + 2, \quad y_1(D) = \ol{y}_1, \quad x_2(D) = \ol{x}_2 + 1, \quad \text{and} \quad y_2(D) = \ol{x}_1,\]
and thus $x_1(D)=  y_2(D)+1 \geq 2$.

We also have the four criteria of Lemma~\ref{DM}:
\begin{itemize}
\item The $(x_1 + 1)$st occurrence of a $d$ or $*$ is the second $*$ from Step (1) which is followed by $d$;
\item The $(x_2+2)$st occurrence of a $d$ or $*$ is the inserted $d$ from Step (3) which is followed by $d$ (or at the end);
\item The $(y_1+1)$st occurrence of a $u$ or $*$ is the inserted $u$ from Step (2) and thus is preceded by $u$; and
\item The $(y_2 + 2)$nd occurrence of a $u$ or $*$ is the first $*$ which immediately follows a $u$.
\end{itemize}

To see that this process is invertible, consider any Dyck path $D \in \widehat{\D}_n$ with  $x_1(D)=  y_2(D)+1 \geq 2$. To create $\ol{M}^*$, start with $M^*_D$ and then:
\begin{itemize}
\item Delete the $(x_2)$th $d$.
\item Delete the second $*$.
\item Delete the $(y_1 + 1)$st $u$.
\end{itemize}
Because $x_1 = y_2 + 1$, we have $y_1 + 1 \leq  x_2$ and so this process results in $\ol{M}^* \in \M^*_{n-4}$. Now let $P$ be the maximal subpath in $\ol{M}^*$ beginning with the entry immediately following the $*$. Deleting the $u$ and the $*$ preceding $P$, all of $P$, and the $d$ following $P$ inverts the process.
\end{proof}

\begin{ex} Suppose $n=11$ and let $M = ud \in \M_{2}$ and $P = hh \in \M_2$. There are 3 corresponding Dyck paths with $D \in \widehat{\D}_n$ with  $x_1(D)=  y_2(D)+1 \geq 2$ and we provide one example. First, let $j=1$ and create the word $\ol{M}^*\in \M^*_{7}$ by inserting $u*Pd$ before the first entry in $M$:
\[ \ol{M}^* = \framebox{$u*hhd$}\ ud.\]Notice $\ol{x}_1 = 1$ and $\ol{y}_1=0$ since there is only one entry, $u$, before the $*$.  Then, following the procedure in the proof of Lemma~\ref{L4Type3}, we insert $*$ before the second $d$ and note that $\ol{x}_2 = 2.$ Then we insert $u$ at the beginning and $d$ at the end to get
\[ M^* = \bm{u}  \framebox{$u*hhd$}\ u\bm{*}d\bm{d}. \] Let $D=D_{M^*}$. By inspection, we note $x_1(D) = 2$ and $y_2(D) = 1$, and we can easily check that $L(D) = 4$.
\end{ex}

\begin{lem} \label{L4Type4} For $n \geq 7$, the number of Dyck paths $D \in \widehat{\D}_n$ with  $x_1(D)=  y_2(D)$ is $$\sum_{i=0}^{n-7} (i+1)M_iM_{n-7-i}.$$ Also, for $n=3$, there is exactly 1 Dyck path $D \in \widehat{\D}_3$ with $x_1(D)=  y_2(D)$.
\end{lem}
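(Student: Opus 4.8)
The plan is to prove the $n \geq 7$ formula by a bijection mirroring the proof of Lemma~\ref{L4Type3}, since the target count $\sum_{i=0}^{n-7}(i+1)M_iM_{n-7-i}$ is identical to the one there. I would biject triples $(M,P,j)$, where $M \in \M_i$ and $P \in \M_{n-7-i}$ with $0 \leq i \leq n-7$ and $1 \leq j \leq i+1$, with the Dyck paths $D \in \widehat{\D}_n$ satisfying $x_1(D) = y_2(D)$. The factor $(i+1)$ in each summand will arise from the $i+1$ admissible values of $j$, exactly as in Lemma~\ref{L4Type3}.

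For the forward map, I would first build an intermediate word $\ol{M}^* \in \M^*_{n-4}$ by inserting a block built from $P$, the first $*$, and a matching $u$ and $d$ into $M$ immediately before its $j$th entry, and then insert the second $*$ together with a further $u$ and $d$ to obtain $M^* \in \M^*_{n-1}$; in total six symbols (two $*$'s, two $u$'s, two $d$'s) are added to $M$ and $P$, whose lengths total $n-7$. The only substantive departure from Lemma~\ref{L4Type3}, whose construction produces $x_1 = y_2 + 1$, is the placement of the balancing $u$: here I would position it so that the first $*$ acquires no additional preceding $u$, which forces $x_1(D) = y_2(D)$. After fixing the placement I would express $x_1, y_1, x_2, y_2$ in terms of the up- and down-counts of $\ol{M}^*$ and verify the four adjacency conditions of Lemma~\ref{DM}, each reducing to the fact that an inserted $u$, $d$, or $*$ sits next to a matching $u$-or-$*$ or $d$-or-$*$. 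This certifies $r_{k_1} = s_{k_1} = r_{k_2} = s_{k_2} = 1$, hence $L(D_{M^*}) = 4$ and $D_{M^*} \in \widehat{\D}_n$ with $x_1 = y_2$.

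For invertibility I would start from a given $D \in \widehat{\D}_n$ with $x_1(D) = y_2(D)$ and its word $M^*_D$, delete both $*$'s and the two distinguished inserted $u$ and $d$ steps (located via the counts $x_1, x_2, y_1, y_2$), identify the maximal Motzkin subpath $P$ in the resulting middle region, and strip the surrounding block to recover $(M, P, j)$. The equality $x_1 = y_2$ supplies the inequality between the deletion indices --- the analog of the bound ``$y_1 + 1 \leq x_2$'' invoked in Lemma~\ref{L4Type3} --- that guarantees the deletions happen in a consistent order and leave a genuine Motzkin word.

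For the exceptional value $n = 3$, the word $M^*_D \in \M^*_2$ must consist of two $*$'s, so $M^*_D = **$; by Definition~\ref{theta} this gives $\Asc(D) = \Des(D) = (1,2,3)$, i.e. $D = ududud$, whose $r$-$s$ array has all four entries equal to $1$, so $L(D) = 4$ and $x_1(D) = y_2(D) = 0$, and this is the only such path. The step I expect to be the main obstacle is pinning down the precise insertion recipe so that $x_1(D) = y_2(D)$ holds exactly while all four Lemma~\ref{DM} conditions are met and the map remains invertible. The equality case is the most delicate of the four cases because the two stars' controlling steps are maximally adjacent, so a single misplaced insertion would push the output into the regime $x_1 = y_2 + 1$ of Lemma~\ref{L4Type3} or the regime $x_1 \notin \{y_2, y_2+1\}$ of Lemma~\ref{L4Type1}; particular care is needed at the degenerate boundary $x_1 = y_2 = 0$, exactly the configuration underlying the separate $n = 3$ computation.
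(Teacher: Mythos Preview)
Your plan is essentially the same as the paper's proof: the paper also bijects triples $(M,P,j)$ with the desired Dyck paths by first forming $\ol{M}^*\in\M^*_{n-4}$ via inserting the block $*uPd$ (rather than the $u*Pd$ of Lemma~\ref{L4Type3}) before the $j$th entry of $M$, and then inserting the second $*$, one $u$, and one $d$ at positions dictated by $\ol{x}_1,\ol{y}_1,\ol{x}_2$; the swap $u*\to *u$ is precisely your ``first $*$ acquires no additional preceding $u$'' modification, and it yields $x_1(D)=\ol{x}_1+1=y_2(D)$. One small sharpening: the degenerate case the inverse must exclude is not literally ``$x_1=y_2=0$'' but ``the first $*$ is immediately followed by the second $*$''; the paper shows via Lemma~\ref{DM} that this forces $M^*_D=**$ and hence $n=3$, which a posteriori coincides with $x_1=y_2=0$, but the argument runs through the adjacency of the stars.
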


\begin{proof}
Similar to the proof of Lemma~\ref{L4Type3}, consider a pair of Motzkin paths, $M$ and $P$, where $M \in \M_i$ and $P \in \M_{n-7-i}$ with $0 \leq i \le n-7$. For each such pair, we consider $1 \leq j \leq i+1$ and find a corresponding Dyck path $D \in \widehat{\D}_n$ with  $x_1(D)=  y_2(D)$. Thus, there will be $i+1$ corresponding Dyck paths for each pair $M$ and $P$.

We begin by creating a modified Motzkin word $\ol{M}^* \in \M^*_{n-4}$ by inserting $*$, followed by $u$, followed by the path $P$, followed by $d$ before the $j$th entry in $M$. Then, let $\overline{x}_1$ be the number of ups before $*$ in $\ol{M}^*$ and $\overline{y}_1$  be the number of downs before the $*$ in $\ol{M}^*$.  We now create a modified Motzkin word $M^* \in \M^*$ as follows.
\begin{enumerate}
\item Insert $*$ after the $(\ol{x}_1 + 1)$st $d$  in $\ol{M}^*$. Let  $\overline{x}_2$ be the number of ups before this second $*$. 
\item Insert $u$ after the $\ol{y}_1$th $u$ in $\ol{M}^*$ or at the beginning if $\ol{y}_1 = 0$.
\item Insert $d$ before the $(\ol{x}_2 + 1)$st $d$ (or at the end).
\end{enumerate}
Let $D = D_{M^*}$. By construction, we have
\[ x_1(D) = \ol{x}_1 + 1, \quad y_1(D) = \ol{y}_1, \quad x_2(D) = \ol{x}_2 + 1, \quad \text{and} \quad y_2(D) = \ol{x}_1+1.\] It is easy to verity that the criteria in Lemma~\ref{DM} are satisfied and so $D \in \widehat{\D}_n$ with  $x_1(D)=  y_2(D)$.

To see that this process is invertible, consider any Dyck path $D \in \widehat{\D}_n$ with  $x_1(D)=  y_2(D)$. Since $x_1=y_2$, there are $y_2$ ups before the first $*$, in $M^*_D$ and thus the first $*$ in $M^*_D$ is the $(y_2+1)$th occurrence of a $u$ or $*$.  By the fourth criterium in Lemma~\ref{DM}, the first $*$ must be followed by another $u$ or $*$. Similarly, the $(x_1 + 2)$th occurrence of either a $d$ or $*$ is the second $*$.  Thus, by the second criterium of Lemma~\ref{DM}, the second $*$ must be immediately preceded by a $d$ or $*$.

We now show that the case where the first $*$ is immediately followed by the second $*$ results in only one Dyck path.  In this case, $x_1=y_1=x_2=y_2$, and thus $M^*_D$ can be decomposed as a Motzkin path, followed by $**$, followed by another Motzkin path.  By the second criterium in Lemma~\ref{DM}, the entry after the second $*$ must be a $d$ (which is not allowed) and thus $M^*_D$ ends in $*$. Similarly, the third criterium in Lemma~\ref{DM} tells us $M^*_D$ begins with $*$ and so $M^*_D = **$.  Thus, $D \in \widehat{\D}_n$ and is the path $D=ududud$.

We now assume the first $*$ is followed by $u$ which implies the second $*$ is preceded by $d$. In this case, we must have at least $y_1 + 1$ downs before the second $*$ and at least $x_1 + 1$ ups before the second $*$ yielding
\[ y_1 + 1 \leq x_1 = y_2 \leq x_2 - 1.\]
Thus the $(y_1+1)$th $u$ comes before the first $*$ and the $x_2$th $d$ comes after the second $*$. To find $\ol{M}^*$ from $M^*_D$:
\begin{itemize}
\item Delete the $x_2$th $d$.
\item Delete the second $*$.
\item Delete the $(y_1 + 1)$st $u$;
\end{itemize}
which results in  $\ol{M}^* \in \M^*_{n-4}$. Now let $P$ be the maximal subpath in $\ol{M}^*$ beginning with the entry after the $u$ that immediately follows the remaining $*$. (The entry after $P$ must be $d$ since $P$ is maximal and  $\ol{M}^*$ is a Motzkin path when ignoring the $*$.) Removing $uPd$ and the remaining $*$ from $\ol{M^*}$ results in a Motzkin path $M$ as desired.
\end{proof}

\begin{ex} Suppose $n=11$ and let $M = ud \in \M_{2}$ and $P = hh \in \M_2$. There are 3 corresponding Dyck paths with $D \in \widehat{\D}_n$ with  $x_1(D)=  y_2(D)$ and we provide one example. First, let $j=1$ and create the word $\ol{M}^*\in \M^*_{7}$ by inserting $*uPd$ before the first entry in $M$:
\[ \ol{M}^* = \framebox{$*uhhd$}\ ud.\]Notice $\ol{x}_1 = 0$ and $\ol{y}_1=0$ since there are no entries before the $*$.  Then, following the procedure in the proof of Lemma~\ref{L4Type4}, we insert $*$ after the first $d$ and note that $\ol{x}_2 = 1.$ Then we insert $u$ at the beginning and $d$ before the second $d$ to get
\[ M^* = \bm{u}  \framebox{$*uhhd$}\bm{*}u\bm{d}d. \] Let $D=D_{M^*}$. By inspection, we note $x_1(D) = y_2(D) = 1$, and we can easily check that $L(D) = 4$.
\end{ex}

\begin{thm}\label{thm:L4}
The number of Dyck paths with semilength $n \geq 4$ and $L=4$ is
\[ |\D_n^4| =2\left(T_{n-2, 3} + \sum_{i=0}^{n-6} (i+1)M_i T_{n-4-i, 3}\right)  + \binom{n-5}{2}M_{n-7} + M_{n-5} + 2\sum_{i=0}^{n-7} (i+1)M_i M_{n-7-i}.  \] Also, $|\D_3^4| = 1$.
\end{thm}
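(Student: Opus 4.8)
The plan is to sort the Dyck paths in $\D_n^4$ by the number of $*$'s in $M^*_D$. As observed at the start of Section~\ref{SecL4}, when $L(D)=4$ this number is either one or two. A single $*$ forces the unique nontrivial binomial coefficient to equal $\binom{4}{1}=4$, so $(r_k,s_k)\in\{(1,3),(3,1)\}$ and these paths are exactly $\D_n^{1,3}\cup\D_n^{3,1}$; two $*$'s force both nontrivial coefficients to equal $2$, i.e.\ $r_{k_1}=s_{k_1}=r_{k_2}=s_{k_2}=1$, and these are exactly the paths of $\widehat{\D}_n$. Since these alternatives are disjoint and exhaustive, I would write
\[ |\D_n^4| = |\D_n^{1,3}| + |\D_n^{3,1}| + |\widehat{\D}_n|. \]

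For the single-$*$ contribution I would invoke Proposition~\ref{oneterm}. As noted after that proposition, the count depends only on $r+s$, so $|\D_n^{1,3}|=|\D_n^{3,1}|$, and specializing to $r+s=4$ (hence $r+s-1=3$ and $n-2-s-r=n-6$) gives $|\D_n^{1,3}|+|\D_n^{3,1}| = 2\left(T_{n-2,3}+\sum_{i=0}^{n-6}(i+1)M_iT_{n-4-i,3}\right)$, which is precisely the first summand of the theorem.

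For $|\widehat{\D}_n|$ I would partition according to the relationship between $x_1(D)$ and $y_2(D)$, matching the four mutually exclusive and exhaustive cases of Lemmas~\ref{L4Type1}–\ref{L4Type4}: every $D\in\widehat{\D}_n$ satisfies exactly one of $x_1\notin\{y_2,y_2+1\}$ (Lemma~\ref{L4Type1}), $x_1=y_2$ (Lemma~\ref{L4Type4}), or $x_1=y_2+1$, and the last condition splits cleanly into the subcase $x_1=1,\,y_2=0$ (Lemma~\ref{L4Type2}) and $x_1=y_2+1\ge 2$ (Lemma~\ref{L4Type3}). Adding the four counts, the two equal sums $\sum_{i=0}^{n-7}(i+1)M_iM_{n-7-i}$ coming from Lemmas~\ref{L4Type3} and~\ref{L4Type4} merge into a factor of $2$, yielding
\[ |\widehat{\D}_n| = \binom{n-5}{2}M_{n-7} + M_{n-5} + 2\sum_{i=0}^{n-7}(i+1)M_iM_{n-7-i}, \]
the remaining summands. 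Combining the two contributions gives the claimed formula.

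The hard part will be bookkeeping rather than new ideas. First, I must be certain the four-way split covers $\widehat{\D}_n$ once and only once, paying particular attention to the boundary value: the configuration $x_1=y_2+1=1$ must be assigned to Lemma~\ref{L4Type2} and not Lemma~\ref{L4Type3}, and the degenerate configuration $x_1=y_2=0$ (both $*$'s adjacent at the very start) must be recognized as the single path $D=ududud$ that the proof of Lemma~\ref{L4Type4} isolates. Second, the constituent lemmas carry hypotheses ($n\ge 7$ or $n\ge 5$), so for $4\le n\le 6$ the argument needs the standard conventions ($M_m=0$ for $m<0$, empty sums vanish, $\binom{m}{2}=0$ for $m<2$) to see that the extra terms disappear, together with a direct inspection confirming no path of $\widehat{\D}_n$ is omitted; for these small $n$ one checks the totals against $2,5,9$. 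Finally, the boundary case $n=3$ must be handled separately: there $\D_3^{1,3}=\D_3^{3,1}=\emptyset$ while Lemma~\ref{L4Type4} supplies the single path $ududud$, giving $|\D_3^4|=1$.
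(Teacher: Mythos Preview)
Your proposal is correct and follows exactly the paper's approach: the paper's proof is the single sentence ``This is a direct consequence of Proposition~\ref{oneterm} along with Lemmas~\ref{L4Type1}, \ref{L4Type2}, \ref{L4Type3}, and \ref{L4Type4},'' and you have spelled out precisely that decomposition (with more careful attention to the small-$n$ boundary cases than the paper itself provides).
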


\begin{proof} This is a direct consequence of Proposition~\ref{oneterm} along with Lemmas~\ref{L4Type1}, \ref{L4Type2}, \ref{L4Type3}, and \ref{L4Type4}. \end{proof}

\section{Further Remarks}\label{SecRemarks}
As seen in Section~\ref{SecL4}, finding $|\D_n^k|$ is more complicated when $k$ is not prime, as there could be many ways to write $k$ as a product of binomial coefficients.  For example, consider $k=6$. If $D \in \D_n^6$, then one of the following is true:
\begin{itemize}
\item $D \in \D_n^{1,5}$ or $D \in \D_n^{5,1}$;
\item $D \in \D_n^{2,2}$; or
\item All but two terms in the product $\prod_{i=1}^{n-1} {r_i + s_i \choose r_i}$ are equal to 1, and those terms must equal  $2$ and $3$.
\end{itemize}

The number of Dyck paths in the first two cases is given by Proposition~\ref{oneterm}:
\[ |\D_n^{1,5}| = |\D_n^{5,1}| = T_{n-2,5} + \sum_{i=0}^{n-8}(i+1)M_i T_{n-4-i, 5}\]
and
\[  |\D_n^{2,2}|  = T_{n-2,3} + \sum_{i=0}^{n-6}(i+1)M_i T_{n-4-i, 3}.\]

In the final case, we have
\[ L(D) = {r_{k_1} + s_{k_1} \choose r_{k_1}} {r_{k_2} + s_{k_2} \choose r_{k_2}} \] where exactly one of $\{r_{k_1}, r_{k_2}, s_{k_1},s_{k_2}\}$ is equal to 2 and the other three values are 1. By symmetry, we need only to consider two cases: when $r_{k_1} = 2$ and when $s_{k_1} = 2$.  We can appreciate that these cases can become quite involved; the proofs would involve similar techniques to those found in Section~\ref{SecL4} along with the proof of Proposition~\ref{oneterm}. Although we do not provide a closed form, the number of Dyck paths $D \in \D_n^6$ in this case are (starting at $n=4$):
\[ 2, 4, 8, 16, 44, 122, 352, 1028, 3036, \ldots.\]
Combining the first two cases with this case, we provide the first terms of the values of $|\D_n^6|$ (starting at $n=4$):
\[ 3, 6, 14, 34, 92, 252, 710, 2026, 5844, \ldots.\]

Further work in this area could involve finding formulas for $|\D_n^k|$ when $k$ is a non-prime number greater than 4. It also still remains open to refine the enumeration of $\D_n^k$ with respect to the number of returns. Having such a refinement in terms of number of returns would yield a new formula for the number of 321-avoiding permutations of length $3n$ composed only of 3-cycles as seen in Equation~(\ref{eqnSumL2}).

\bibliographystyle{amsplain}

\end{document}